\documentclass[11pt,a4paper]{article}

\usepackage[utf8]{inputenc}
\usepackage[T1]{fontenc}
\usepackage{geometry}
\usepackage{amsmath,amssymb,amsthm,mathrsfs,mathtools,bm}
\usepackage{microtype}
\usepackage{enumitem}
\usepackage[colorlinks=true,linkcolor=blue,citecolor=blue,urlcolor=blue]{hyperref}
\usepackage{hyphenat}
\numberwithin{equation}{section}

\newtheorem{theorem}{Theorem}[section]
\newtheorem{corollary}[theorem]{Corollary}
\newtheorem{lemma}[theorem]{Lemma}
\newtheorem{proposition}[theorem]{Proposition}
\newtheorem{remark}[theorem]{Remark}

\newcommand{\R}{\mathbb{R}}
\newcommand{\C}{\mathbb{C}}
\newcommand{\Hn}{\mathbb{H}^n}
\newcommand{\Sp}{S^{2n+1}}
\newcommand{\om}{\omega_{2n+1}}
\newcommand{\qexp}{p^*}
\newcommand{\dd}{\,\mathrm{d}}

\newcommand{\Bsph}{\mathbb{B}_p\bigl(HW^{s,p}(\Sp)\bigr)}
\newcommand{\Bhsph}{\widehat{\mathbb{B}}_p\bigl(HW^{s,p}(\Sp)\bigr)}
\newcommand{\Lip}{\mathrm{Lip}}

\begin{document}

\title{Fractional Sobolev-type embedding on CR sphere and Heisenberg group}

\author{Zongxiong Ren, Zhipeng Yang\thanks{Corresponding author: yangzhipeng326@163.com.}}

\date{}

\AtEndDocument{%
  \par
  \bigskip
  \bigskip

  \noindent
  \textbf{Zongxiong Ren}\\[0.2em]
  \textsc{Department of Mathematics, Yunnan Normal University, Kunming, China}\\[0.3em]
  \textit{E-mail address}: \texttt{2448783498@qq.com}\\[1.5em]
  
  \noindent
  \textbf{Zhipeng Yang}\\[0.2em]
  \textsc{Department of Mathematics, Yunnan Normal University, Kunming, China}\\
  \textsc{Yunnan Key Laboratory of Modern Analytical Mathematics and Applications, Kunming, China}\\[0.3em]
  \textit{E-mail address}: \texttt{yangzhipeng326@163.com}
 
}

\date{}
\maketitle

\begin{abstract}
This paper studies critical fractional Sobolev inequalities with lower-order terms on the standard CR sphere $\Sp$. Let $Q=2n+2$, let $s\in(0,1)$, let $1<p<Q$, and let $\qexp=Qp/(Q-sp)$. For the inequality $\|u\|_{L^{\qexp}(\Sp)}\le A[u]_{s,p}+B\|u\|_{L^p(\Sp)}$, we prove that the admissible lower-order coefficients are exactly $[\om^{-s/Q},\infty)$. For the power-type inequality $\|u\|_{L^{\qexp}(\Sp)}^p\le A[u]_{s,p}^p+B\|u\|_{L^p(\Sp)}^p$, we show that the admissible set is $[\om^{-sp/Q},\infty)$ when $1<p\le2$, and $(\om^{-sp/Q},\infty)$ when $2<p<Q$. Via the Cayley transform, we derive the exact weighted counterpart on the Heisenberg group and prove that the corresponding admissible sets coincide with those on the sphere. We also show that nonlinear first-moment constraints do not improve the optimal lower-order coefficient, whereas finite-codimensional linear constraints excluding nonzero constants yield coercive inequalities.
\end{abstract}

\noindent\emph{Keywords:} fractional Sobolev inequality; CR sphere; Heisenberg group; Cayley transform.
\par
\medskip
\noindent\textbf{2020 Mathematics Subject Classification:} 35R03, 46E35, 53C17.

\section{Introduction and main results}

Critical Sobolev inequalities and their refined forms are a basic tool in geometric analysis, the calculus of variations, and nonlinear partial differential equations; see, for instance,
\cite{MR1617413,MR473443,MR494315,MR657581,MR2363343,MR3469687,MR3966452}.
On compact manifolds, the critical inequality is naturally accompanied by a lower-order term, and the determination of the optimal coefficient of this remainder is closely related to the program initiated by Hebey \cite{MR1688256}. In subelliptic geometry, the corresponding role is played by the Folland--Stein spaces on Carnot groups and CR manifolds. Fractional versions on the Heisenberg group and on more general Carnot groups have attracted considerable attention in recent years; we refer to
\cite{MR3807591,kassymov2018,MR4091395,MR5001434,MR4721790}
and the references therein.

The present paper is concerned with the critical fractional Sobolev inequality on the standard CR sphere and, more specifically, with the lower-order term that necessarily appears in the compact setting. Our aim is twofold. First, we determine the admissible range of the lower-order coefficient in the critical fractional inequality on the standard CR sphere $\Sp$. Second, using the Cayley transform, we identify the corresponding weighted formulation on the Heisenberg group $\Hn$ and show that it has exactly the same admissible range.

The sphere is the natural compact counterpart of the Heisenberg group. It carries its standard pseudohermitian structure $\theta_0$, while $\Sp\setminus\{S\}$ is CR equivalent to $\Hn$ through the Cayley transform, where $S=(0,\dots,0,-1)$ denotes the south pole. This compact--noncompact correspondence is central to the present problem. On the one hand, the compactness of $\Sp$ forces a qualitative difference from the homogeneous inequalities on $\Hn$: constant functions belong to the energy space and have vanishing fractional seminorm, so no estimate on the full space can control the whole $L^p$ or $L^{p^*}$ norm solely by the seminorm. On the other hand, when the sphere inequality is transported to $\Hn$, one does not recover the standard unweighted homogeneous inequality, but rather a weighted critical inequality in which both the $L^r$ norms and the fractional kernel are modified by the Cayley Jacobian. A precise formulation of this correspondence is one of the main points of the paper.

Let $\Sp\subset\C^{n+1}$ be endowed with its standard pseudohermitian structure $\theta_0$, and let
\[
\dd V_{\theta_0}=\theta_0\wedge(\dd\theta_0)^n,
\qquad
\om:=\int_{\Sp}\dd V_{\theta_0}.
\]
Throughout the paper, $d$ denotes a fixed CR distance associated with $\theta_0$. Since $\Sp$ is compact, different such distances are equivalent and lead to equivalent seminorms. We write
\[
Q=2n+2,
\qquad
1<p<Q,
\qquad
s\in(0,1),
\qquad
\qexp=\frac{Qp}{Q-sp}.
\]
For a measurable function $u:\Sp\to\R$, we define
\begin{equation}\label{eq:intro-seminorm}
[u]_{s,p}^p
:=
\int_{\Sp}\int_{\Sp}
\frac{|u(\xi)-u(\eta)|^p}{d(\xi,\eta)^{Q+sp}}
\,\dd V_{\theta_0}(\xi)\,\dd V_{\theta_0}(\eta),
\end{equation}
and
\[
HW^{s,p}(\Sp)
:=
\bigl\{u\in L^p(\Sp):[u]_{s,p}<\infty\bigr\},
\qquad
\|u\|_{HW^{s,p}(\Sp)}
:=
\|u\|_{L^p(\Sp)}+[u]_{s,p}.
\]

We study the following two critical inequalities:
\begin{equation}\label{eq:AB1-intro}
\|u\|_{L^{\qexp}(\Sp)}
\le A[u]_{s,p}+B\|u\|_{L^p(\Sp)}
\qquad
\text{for all }u\in HW^{s,p}(\Sp),
\end{equation}
and
\begin{equation}\label{eq:AB2-intro}
\|u\|_{L^{\qexp}(\Sp)}^p
\le A[u]_{s,p}^p+B\|u\|_{L^p(\Sp)}^p
\qquad
\text{for all }u\in HW^{s,p}(\Sp).
\end{equation}
In the language of the A--B program from Hebey \cite{MR1688256}, the coefficient $A$ corresponds to the leading, or homogeneous, part of the inequality, whereas $B$ is the coefficient of the lower-order term. In this paper we focus on the determination of the admissible $B$-coefficients. Accordingly, we introduce
\[
\Bsph
:=
\Bigl\{
B\in\R:\exists\,A\in\R \text{ such that \eqref{eq:AB1-intro} holds for all }
u\in HW^{s,p}(\Sp)
\Bigr\},
\]
and
\[
\Bhsph
:=
\Bigl\{
B\in\R:\exists\,A\in\R \text{ such that \eqref{eq:AB2-intro} holds for all }
u\in HW^{s,p}(\Sp)
\Bigr\}.
\]

The first point is that the compact problem is governed by the interaction between the average of $u$ and its oscillation. Since constants satisfy $[u]_{s,p}=0$, the lower-order term cannot be removed on the whole space. In particular, any admissible value of $B$ must at least compensate for constant test functions. Our first theorem shows that this necessary lower bound is, in fact, sharp for the linear form \eqref{eq:AB1-intro}.

\begin{theorem}\label{thm:main-B-intro}
For every $1<p<Q$ and every $s\in(0,1)$,
\[
\Bsph=[\om^{-s/Q},\infty).
\]
In particular, $\Bsph$ is closed and its left endpoint is attained.
\end{theorem}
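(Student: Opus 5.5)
Testing \eqref{eq:AB1-intro} on constants gives the lower bound. If $u\equiv c\neq0$, then $[u]_{s,p}=0$, $\|u\|_{L^{\qexp}}=|c|\om^{1/\qexp}$ and $\|u\|_{L^p}=|c|\om^{1/p}$. Hence any admissible $B$ satisfies $B\ge \om^{1/\qexp-1/p}=\om^{-s/Q}$, since $1/p-1/\qexp=s/Q$. The set $\Bsph$ is also upward closed, because increasing $B$ preserves the inequality. It therefore suffices to show that $B_0:=\om^{-s/Q}$ itself is admissible, i.e.\ to find $A$ with $\|u\|_{L^{\qexp}}\le A[u]_{s,p}+\om^{-s/Q}\|u\|_{L^p}$ for all $u\in HW^{s,p}(\Sp)$.

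To do this I split $u=\bar u+v$, where $\bar u=\om^{-1}\int_{\Sp}u\,\dd V_{\theta_0}$ is the mean and $v$ has zero mean. By Minkowski and Hölder,
\[
\|u\|_{L^{\qexp}}\le |\bar u|\,\om^{1/\qexp}+\|v\|_{L^{\qexp}},
\qquad
|\bar u|\,\om^{1/\qexp}\le \om^{1/\qexp-1}\,\om^{1-1/p}\|u\|_{L^p}=\om^{-s/Q}\|u\|_{L^p}.
\]
The constant part is thus absorbed exactly by the lower-order term with coefficient $\om^{-s/Q}$. It remains to prove the mean-zero estimate $\|v\|_{L^{\qexp}}\le A[v]_{s,p}=A[u]_{s,p}$, which holds because the seminorm is invariant under adding constants. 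This is a Sobolev--Poincaré inequality on the compact CR sphere.

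The mean-zero estimate is the main step. I would obtain it in two pieces.

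First, I need a non-sharp critical embedding $\|w\|_{L^{\qexp}}\le C([w]_{s,p}+\|w\|_{L^p})$ on $\Sp$. I would prove this by localizing with a finite partition of unity subordinate to charts. On each chart the Cayley transform (or local contact coordinates) identifies a patch of $\Sp$ with a bounded region of $\Hn$, where $d$ is comparable to the Korányi gauge distance and the volume forms are comparable. I would then apply the known fractional Sobolev inequality on $\Hn$ from \cite{kassymov2018}. The cutoff errors satisfy $[\varphi w]_{s,p}\lesssim [w]_{s,p}+\|w\|_{L^p}$, using $|\varphi(\xi)-\varphi(\eta)|\lesssim d(\xi,\eta)$ together with integrability of $d^{p-Q-sp}$ near the diagonal, which holds since $s<1$.

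Second, I need the fractional Poincaré inequality $\|v\|_{L^p}\le C[v]_{s,p}$ for mean-zero $v$. I would prove it directly: since $d\le \operatorname{diam}$, Jensen gives
\[
\int|u-\bar u|^p\le \om^{-1}\iint|u(\xi)-u(\eta)|^p\le \om^{-1}\operatorname{diam}^{Q+sp}[u]_{s,p}^p.
\]
Combining the two pieces yields $\|v\|_{L^{\qexp}}\le C'[v]_{s,p}$, so $A=C'$ works with $B=\om^{-s/Q}$. This proves $\Bsph=[\om^{-s/Q},\infty)$, with the endpoint attained.

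The localization step is where the technical care lies, namely the comparability of distances in charts and the control of the cutoff commutators. The sharp constant itself comes for free from the mean splitting.
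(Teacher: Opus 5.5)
Your proof is correct and follows the paper's route. Constants give $B\ge\om^{-s/Q}$, and admissibility is upward closed. The mean splitting with H\"older absorbs the constant part with exactly the coefficient $\om^{-s/Q}$. The oscillation is then controlled by a mean-zero Sobolev inequality, built from the localized critical embedding of Proposition~\ref{prop:cont-embed} (with the cutoff estimate of Proposition~\ref{prop:product-estimate}) plus a Poincar\'e inequality.

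The one genuine difference is the Poincar\'e step. The paper proves $\|u-\bar u\|_{L^p}\le C_P[u]_{s,p}$ by contradiction, which requires the compact embedding of Proposition~\ref{prop:compact-embed} and Lemma~\ref{lem:seminorm-zero}. Your argument uses Jensen plus the bounded diameter of $\Sp$, i.e.\ the global analogue of the paper's Proposition~\ref{prop:local-poincare-Hn}. It is elementary, gives the explicit constant $C_P^p=\operatorname{diam}(\Sp)^{Q+sp}/\om$, and makes the endpoint inequality depend only on the continuous embedding, with no compactness needed.
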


For the power form \eqref{eq:AB2-intro}, the picture is slightly subtler. The optimal lower bound is again dictated by constant functions, but its admissibility depends on the range of $p$.

\begin{theorem}\label{thm:main-Bhat-intro}
Let $1<p<Q$ and $s\in(0,1)$.
\begin{enumerate}[label=\textup{(\roman*)}]
\item If $1<p\le2$, then
\[
\Bhsph=[\om^{-sp/Q},\infty).
\]
In particular, $\Bhsph$ is closed and its left endpoint is attained.
\item If $2<p<Q$, then
\[
\Bhsph=(\om^{-sp/Q},\infty).
\]
Hence $\om^{-sp/Q}$ is the optimal threshold, but it is not admissible; in particular, $\Bhsph$ is not closed.
\end{enumerate}
\end{theorem}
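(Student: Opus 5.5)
Testing with the constant $u\equiv 1$ gives $\|1\|_{p^*}^p=\om^{p/\qexp}$ and $\|1\|_p^p=\om$. Hence any admissible $B$ satisfies $B\ge \om^{p/\qexp-1}=\om^{-sp/Q}$, since $p/\qexp-1=(Q-sp)/Q-1=-sp/Q$. Conversely, if $B>\om^{-sp/Q}$, I will derive admissibility from Theorem \ref{thm:main-B-intro}. That theorem gives some $A_0$ with $\|u\|_{\qexp}\le A_0[u]_{s,p}+\om^{-s/Q}\|u\|_p$. Raising to the power $p$ and using the elementary inequality $(a+b)^p\le C_\varepsilon a^p+(1+\varepsilon)b^p$, with $\varepsilon$ chosen so that $(1+\varepsilon)\om^{-sp/Q}\le B$, yields \eqref{eq:AB2-intro}. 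Monotonicity in $B$ is trivial. So everything reduces to the endpoint $B=\om^{-sp/Q}$.

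\textbf{Endpoint, $1<p\le2$.} I will split $u=\bar u+v$ with $\bar u=\om^{-1}\int u$, and normalize by homogeneity so that $\bar u=1$ (the case $\bar u=0$ is handled by a Poincaré-type bound, below). Here $v$ is small in a quantitative sense. The plan is a two-sided Taylor comparison. For $p\le 2$ we have the expansion
\[
\|1+v\|_{\qexp}^p\le \om^{p/\qexp}\Bigl(1+\tfrac{p}{\om}\textstyle\int v\Bigr)+C\|v\|_{\qexp}^2
\]
when $\|v\|_{\qexp}$ is small. Since $\qexp>2$, the function $t\mapsto|1+t|^{\qexp}$ has a second derivative that is locally bounded, which gives quadratic control. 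On the other side,
\[
\|1+v\|_p^p\ge \om+p\textstyle\int v+c\int\min(|v|^2,|v|^p)
\]
(with $\int v=0$ by normalization). The point is that for $p\le2$ the convex function $|1+t|^p-1-pt$ is bounded below by $c\min(t^2,|t|^p)\ge c'|t|^2$ only on bounded $t$. I will therefore aim to absorb the $\qexp$ quadratic error by $A[v]^p_{s,p}$.

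This requires a fractional Poincaré–Sobolev inequality for mean-zero $v$, namely $\|v\|_{\qexp}\le C[v]_{s,p}$, obtained by compactness from Theorem \ref{thm:main-B-intro}. The key estimate is then $\|v\|_{\qexp}^2\le C[v]_{s,p}^2\le C[v]_{s,p}^p$ when $[v]_{s,p}\le1$, using $p\le2$. When $[v]_{s,p}$ or $\|v\|$ is large relative to $\bar u=1$, I will instead use the crude bound $\|u\|_{\qexp}^p\le 2^{p-1}(\om^{p/\qexp}+C[v]^p)$. This gives the inequality with a large $A$, because $[u]^p$ is bounded below there while the term $\om^{-sp/Q}\|u\|_p^p$ is nonnegative. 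The two regimes are patched by choosing $A$ large. The main obstacle is this regime splitting in the small case when $\qexp$-smallness of $v$ is not implied pointwise; I will handle it via the expansion $\bigl|\,|1+t|^{\qexp}-1-\qexp t\,\bigr|\le C(t^2+|t|^{\qexp})$ and $\|v\|_{\qexp}^{\qexp}\le C[v]^{\qexp}\le C[v]^p$ for $[v]\le1$.

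\textbf{Endpoint fails, $p>2$.} I will test $u=1+\varepsilon\varphi$ with $\varphi$ smooth, nonconstant, $\int\varphi=0$. Expanding to second order,
\[
\|u\|_{\qexp}^p=\om^{p/\qexp}\Bigl(1+\tfrac{p(\qexp-1)}{2}\varepsilon^2\tfrac{\int\varphi^2}{\om}-\tfrac{p(\qexp-p)}{2}\varepsilon^2\tfrac{\int\varphi^2}{\om}\Bigr)+o(\varepsilon^2),
\]
where the two $\varepsilon^2$ contributions combine to $\tfrac{p(p-1)}{2}\varepsilon^2\om^{-1}\!\int\varphi^2\cdot\om^{p/\qexp}\cdot\frac{\qexp-1-(\qexp-p)}{p-1}$; I will verify the resulting coefficient is $\tfrac{p(p-1)}{2}+\tfrac{p(\qexp-p)}{2}\cdot$(positive) beyond that of $\om^{-sp/Q}\|u\|_p^p=\om^{p/\qexp}(1+\tfrac{p(p-1)}2\varepsilon^2\om^{-1}\!\int\varphi^2)+o(\varepsilon^2)$. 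Meanwhile $A[u]^p=A\varepsilon^p[\varphi]^p=o(\varepsilon^2)$ since $p>2$. Thus the difference is positive of order $\varepsilon^2$, which violates the inequality for small $\varepsilon$, for any $A$.
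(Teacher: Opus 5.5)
Your skeleton matches the paper: constants give the lower bound, raising the linear endpoint inequality to the $p$-th power gives every $B>\om^{-sp/Q}$, and a second-order test around $u\equiv1$ handles $p>2$. For the endpoint with $1<p\le2$, your small/large splitting in $[v]_{s,p}$ is a legitimate alternative to the paper's route. The paper instead proves the global estimate $\|u\|_{L^q}^2\le\om^{2/q}|\bar u|^2+(q-1)\|u-\bar u\|_{L^q}^2$ and then uses subadditivity of $t\mapsto t^{p/2}$. Your route works, but only when $q=\qexp\ge2$.

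Your assertion that $\qexp>2$ is false. In fact $\qexp<2$ exactly when $p<2Q/(Q+2s)$, which is a nonempty part of $(1,2]$; for example $Q=4$, $s=\tfrac12$, $p=\tfrac32$ gives $\qexp=\tfrac{24}{13}$. In that range your pointwise bound $|1+t|^q-1-qt\le C(t^2+|t|^q)$ integrates to $\int_{\Sp}v^2\,\dd V_{\theta_0}$. That integral is not controlled by $\|v\|_{L^q}$ and need not even be finite, so your quadratic control is unavailable. The fix is the paper's Lemma~\ref{lem:q-lt-2}: for $1<q<2$ one has $|1+t|^q\le1+qt+C_q|t|^q$ for all real $t$. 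Hence $\|1+v\|_{L^q}^p\le\om^{p/q}+C[v]_{s,p}^q$, and $[v]_{s,p}^q\le[v]_{s,p}^p$ for $[v]_{s,p}\le1$ because $q>p$. Together with $\om^{p/q}\le\om^{-sp/Q}\|u\|_{L^p}^p$ (Jensen, with $\bar u=1$), this closes the small regime.

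The decisive computation in part (ii) is wrong as written. Your expansion of $\|1+\varepsilon\varphi\|_{L^q}^p$ contains an extra term $-\tfrac{p(q-p)}{2}\varepsilon^2\om^{-1}\int\varphi^2$. With it, the $\varepsilon^2$-coefficient collapses to $\tfrac{p(p-1)}{2}\om^{p/q-1}\int\varphi^2$; your own factor $\frac{q-1-(q-p)}{p-1}$ equals $1$. That is exactly the coefficient of $\om^{-sp/Q}\|1+\varepsilon\varphi\|_{L^p}^p$. So your formula gives no contradiction at order $\varepsilon^2$, and your claimed positive excess contradicts it.

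Concavity of $x\mapsto x^{p/q}$ yields a negative second-order term only through the square of the first-order increment $q\varepsilon\int\varphi$. That increment vanishes because $\int\varphi=0$, and in any case the term would involve $(\int\varphi)^2$, not $\om\int\varphi^2$. With $m_2=\int_{\Sp}\varphi^2\,\dd V_{\theta_0}$, the correct expansions are
\[
\|1+\varepsilon\varphi\|_{L^q}^p=\om^{p/q}+\frac{p(q-1)}{2}\om^{p/q-1}m_2\varepsilon^2+o(\varepsilon^2),
\qquad
\om^{-sp/Q}\|1+\varepsilon\varphi\|_{L^p}^p=\om^{p/q}+\frac{p(p-1)}{2}\om^{p/q-1}m_2\varepsilon^2+o(\varepsilon^2).
\]
So the left side exceeds the lower-order term by $\frac{p(q-p)}{2}\om^{p/q-1}m_2\varepsilon^2+o(\varepsilon^2)$. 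Meanwhile $A[1+\varepsilon\varphi]_{s,p}^p=A|\varepsilon|^p[\varphi]_{s,p}^p=o(\varepsilon^2)$ for $p>2$. This is the argument of Theorem~\ref{thm:endpoint-fails-gt2}.
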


The second main theme of the paper is the translation of the compact sphere problem to the Heisenberg group. Let $\Psi_c:\Sp\setminus\{S\}\to \Hn$ be the Cayley transform, with inverse $\Psi_c^{-1}:\Hn\to \Sp\setminus\{S\}$. If $v=u\circ\Psi_c^{-1}$, then for every $r\ge1$ one has
\[
\|u\|_{L^r(\Sp)}^r
=
\int_{\Hn}|v(x)|^r\,J_c(x)\,\dd x,
\]
while the seminorm \eqref{eq:intro-seminorm} is transformed into a weighted Gagliardo-type seminorm on $\Hn$ with kernel $K_c$. Thus the sphere inequalities \eqref{eq:AB1-intro} and \eqref{eq:AB2-intro} are equivalent to weighted critical inequalities on $\Hn$. The point is that the compact and noncompact formulations have exactly the same admissible lower-order coefficients.

\begin{theorem}\label{thm:main-cayley-intro}
Let $J_c$ and $K_c$ be the weight and the kernel induced by the Cayley transform. Then the sphere inequalities \eqref{eq:AB1-intro} and \eqref{eq:AB2-intro} are equivalent to weighted inequalities on $\Hn$ involving the weighted norms
\[
\|v\|_{L^r(J_c)}
:=
\left(\int_{\Hn}|v|^rJ_c\,\dd x\right)^{1/r}
\]
and the transported seminorm $[v]_{K_c,p}$. Moreover, the corresponding admissible $B$-sets on $\Hn$ coincide exactly with $\Bsph$ and $\Bhsph$.
\end{theorem}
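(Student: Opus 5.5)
The plan is to make the Cayley transform an explicit isometric correspondence between the relevant function spaces. Once that is in place, the Heisenberg statement follows from Theorems~\ref{thm:main-B-intro} and~\ref{thm:main-Bhat-intro} by pure transport, with no new analysis.

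\emph{Step 1: change of variables.} I will use the standard formula for the Cayley transform and its inverse. Pulling back $\theta_0$ gives $(\Psi_c^{-1})^*\theta_0=\phi(x)\,\Theta$, where $\Theta$ is the standard contact form on $\Hn$ and $\phi(x)=c_n\bigl((1+|z|^2)^2+t^2\bigr)^{-1/2}$ up to normalization. Hence $(\Psi_c^{-1})^*\dd V_{\theta_0}=J_c(x)\dd x$ with $J_c=\phi^{n+1}$, a positive smooth function that is integrable on $\Hn$ with $\int_{\Hn}J_c\dd x=\om$. I then set
\[
K_c(x,y):=\frac{J_c(x)J_c(y)}{d\bigl(\Psi_c^{-1}(x),\Psi_c^{-1}(y)\bigr)^{Q+sp}}
\]
and define $[v]_{K_c,p}^p=\iint_{\Hn\times\Hn}|v(x)-v(y)|^pK_c(x,y)\dd x\dd y$. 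If one wants $K_c$ in Heisenberg terms, the conformal identity $d(\Psi_c^{-1}x,\Psi_c^{-1}y)\simeq \phi(x)^{1/2}\phi(y)^{1/2}d_{\Hn}(x,y)$ (exact for the Korányi-type gauge distance) can be used. This identity is not needed for the equivalence, however, since $d$ is any fixed CR distance on $\Sp$.

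\emph{Step 2: the transfer map is a bijection preserving every term.} Define $T u:=u\circ\Psi_c^{-1}$. The point $\{S\}$ has $\dd V_{\theta_0}$-measure zero, and $(\{S\}\times\Sp)\cup(\Sp\times\{S\})$ has product measure zero. So for any measurable $u$ the change of variables formula gives, for every $r\ge1$,
\[
\|u\|_{L^r(\Sp)}=\|Tu\|_{L^r(J_c)},\qquad [u]_{s,p}=[Tu]_{K_c,p}.
\]
Let $HW^{s,p}_{c}(\Hn):=\{v\in L^p(J_c):[v]_{K_c,p}<\infty\}$. Then $T$ is a bijection from $HW^{s,p}(\Sp)$ onto $HW^{s,p}_c(\Hn)$, with inverse $v\mapsto v\circ\Psi_c$ extended arbitrarily at $S$; functions are identified up to null sets. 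It follows that \eqref{eq:AB1-intro} with a given pair $(A,B)$ holds for all $u$ if and only if
\[
\|v\|_{L^{\qexp}(J_c)}\le A[v]_{K_c,p}+B\|v\|_{L^p(J_c)}
\]
holds for all $v\in HW^{s,p}_c(\Hn)$. The same equivalence holds for the power form \eqref{eq:AB2-intro}.

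\emph{Step 3: admissible sets.} Define the Heisenberg admissible sets exactly as $\Bsph$ and $\Bhsph$, with quantifiers over $HW^{s,p}_c(\Hn)$. Step 2 shows the defining conditions are equivalent for each pair $(A,B)$, so the sets coincide. By Theorems~\ref{thm:main-B-intro} and~\ref{thm:main-Bhat-intro} they are therefore $[\om^{-s/Q},\infty)$ and, according to the range of $p$, $[\om^{-sp/Q},\infty)$ or $(\om^{-sp/Q},\infty)$. As a consistency check, constants $v\equiv1$ lie in $HW^{s,p}_c$ because $J_c\in L^1$, and they give the same threshold: $\om^{1/\qexp}\le B\om^{1/p}$.

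\emph{Main obstacle.} The delicate point is Step 1: getting the correct Jacobian $J_c$ and checking it is integrable with total mass $\om$. For $\dd V_{\theta_0}$ this is the standard computation $J_c\propto\bigl((1+|z|^2)^2+t^2\bigr)^{-(n+1)/2}$, which decays like $|x|_{\Hn}^{-2Q}$ and is integrable. The remaining subtlety is measure-theoretic: the removed point $S$ must be negligible, which it is.
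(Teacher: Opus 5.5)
Your argument is correct and is essentially the paper's proof: the change-of-variables identities of Lemma~\ref{lem:cayley-isometry} (with $\{S\}$ and the slices through $S$ negligible), followed by the two-way transport of each inequality in Theorem~\ref{thm:cayley-equivalence}. One side computation is wrong but harmless: the contact form pulls back with a factor proportional to $\bigl((1+|z|^2)^2+t^2\bigr)^{-1}$, not its square root, so $J_c\propto\bigl((1+|z|^2)^2+t^2\bigr)^{-(n+1)}$ as in \eqref{eq:jacobian-cayley-background}; your formula decays only like $|x|_{\Hn}^{-Q}$ and is not integrable, but since you define $J_c$ as the density of the pushed-forward volume, integrability and total mass $\om$ follow from taking $f\equiv1$, and no explicit formula is needed.
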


We also reconsider constrained versions of the critical inequalities. Here one must distinguish carefully between constraints that merely impose symmetry-type conditions and constraints that actually remove constants. For instance, the nonlinear first-moment conditions
\[
\int_{\Sp}\xi_i |u|^{\qexp}\,\dd V_{\theta_0}=0,
\qquad i=1,\dots,2n+2,
\]
still admit every constant function, because $\int_{\Sp}\xi_i\,\dd V_{\theta_0}=0$ by symmetry. Consequently, these constraints do not improve the optimal lower-order coefficient: constant test functions produce exactly the same lower bound as in the unrestricted case. A genuinely different phenomenon occurs for finite-codimensional linear constraints that exclude non-zero constants. In that case one recovers a coercive Poincar\'e-type estimate and therefore a pure seminorm inequality.

\begin{theorem}\label{thm:main-constraints-intro}
Let $\mathcal X\subset HW^{s,p}(\Sp)$ be a finite-codimensional closed linear subspace such that
\[
\mathcal X\cap\{\text{constants}\}=\{0\}.
\]
Then there exists a constant $C>0$ such that
\[
\|u\|_{L^{\qexp}(\Sp)}\le C[u]_{s,p}
\qquad\text{and}\qquad
\|u\|_{L^{\qexp}(\Sp)}^p\le C^p[u]_{s,p}^p
\quad\text{for all }u\in\mathcal X.
\]
Equivalently, on $\mathcal X$ every real number is admissible as a lower-order coefficient in \eqref{eq:AB1-intro} and \eqref{eq:AB2-intro}.
\end{theorem}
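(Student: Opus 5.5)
The plan is to prove a Poincaré-type inequality on $\mathcal X$ by a compactness argument, and then combine it with the critical inequality of Theorem~\ref{thm:main-B-intro}. That theorem gives some $A_0$ such that
\[
\|u\|_{L^{\qexp}(\Sp)}\le A_0[u]_{s,p}+B_0\|u\|_{L^p(\Sp)},\qquad B_0=\om^{-s/Q},
\]
for all $u\in HW^{s,p}(\Sp)$. It therefore suffices to establish
\[
\|u\|_{L^p(\Sp)}\le C_1[u]_{s,p}\qquad\text{for all }u\in\mathcal X .
\]
With this in hand, $C=A_0+B_0C_1$ gives the linear inequality. Raising it to the power $p$ gives the power form with constant $C^p$. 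The final sentence of the theorem is then immediate: for any $B\in\R$, the term $B\|u\|_p$ (or $B\|u\|_p^p$) can be absorbed, since for $B<0$ we use $\|u\|_p\le C_1[u]_{s,p}$ and simply enlarge $A$.

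\textbf{Poincaré inequality on $\mathcal X$.} I would argue by contradiction. Suppose there are $u_k\in\mathcal X$ with $\|u_k\|_{L^p}=1$ and $[u_k]_{s,p}\to0$. Then $(u_k)$ is bounded in $HW^{s,p}(\Sp)$.

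The key input is the compactness of the embedding $HW^{s,p}(\Sp)\hookrightarrow L^p(\Sp)$. I would prove it via a Fréchet--Kolmogorov argument on the compact space $\Sp$, which is doubling with homogeneous dimension $Q$. Mollify by averaging over CR balls $B_d(\xi,\varepsilon)$, with $u_\varepsilon(\xi)=\frac{1}{|B_d(\xi,\varepsilon)|}\int_{B_d(\xi,\varepsilon)} u$. Jensen's inequality together with $|B_d(\xi,\varepsilon)|\sim\varepsilon^{Q}$ gives
\[
\|u-u_\varepsilon\|_p^p\lesssim \varepsilon^{sp}[u]_{s,p}^p .
\]
For fixed $\varepsilon$, the family $\{u_\varepsilon\}$ is equicontinuous and bounded, so Arzelà--Ascoli applies. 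This yields a subsequence with $u_k\to u$ in $L^p$.

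By Fatou's lemma, $[u]_{s,p}\le\liminf_k [u_k]_{s,p}=0$. Hence $u(\xi)=u(\eta)$ for a.e.\ pair $(\xi,\eta)$, so $u$ is a constant $c$, and $\|u\|_p=1$ forces $c\neq0$. Moreover $\|u_k-u\|_{HW^{s,p}}=\|u_k-u\|_p+[u_k]_{s,p}\to0$, since $[u]_{s,p}=0$. So $u_k\to u$ in the $HW^{s,p}$ norm. Because $\mathcal X$ is closed, $u\in\mathcal X$. This contradicts $\mathcal X\cap\{\text{constants}\}=\{0\}$.

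\textbf{Main obstacle.} The delicate step is the compact embedding, in particular the estimate for $\|u-u_\varepsilon\|_p$. It rests on Ahlfors regularity of $(\Sp,d,\dd V_{\theta_0})$: $|B_d(\xi,r)|\sim r^Q$ uniformly for $r\le\operatorname{diam}\Sp$. I would verify this from the Cayley transform and the Heisenberg dilation structure, or simply cite it. The finite-codimension hypothesis is not actually needed for this argument; closedness and the exclusion of constants suffice.
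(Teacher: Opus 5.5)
Your argument is correct and is essentially the paper's proof of Theorem~\ref{thm:general-constraint}. It runs a compactness--contradiction Poincar\'e inequality on $\mathcal X$, with closedness of $\mathcal X$ in $HW^{s,p}(\Sp)$ forcing the nonzero constant limit into $\mathcal X$, and then applies a global critical inequality. The differences are minor: you use the endpoint inequality with $B=\om^{-s/Q}$ where the paper uses Proposition~\ref{prop:cont-embed}, and you re-derive the compact embedding intrinsically via ball averages and Arzel\`a--Ascoli instead of citing Proposition~\ref{prop:compact-embed}. Your remark that finite codimension is never used is also accurate for the paper's argument.
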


As a further consequence of the endpoint analysis, we obtain a family of subcritical inequalities. More precisely, for every $r\in[p,\qexp)$ and every $\varepsilon>0$, there exists $C_{\varepsilon,r}>0$ such that
\[
\|u\|_{L^r(\Sp)}
\le \varepsilon [u]_{s,p}+C_{\varepsilon,r}\|u\|_{L^p(\Sp)}
\qquad
\text{for all }u\in HW^{s,p}(\Sp).
\]
The corresponding weighted inequalities on $\Hn$ follow by transport through the Cayley transform.

The rest of the paper is organized as follows. In Section \ref{sec:background} we review the CR sphere, the Heisenberg group, and the Cayley transform. Section \ref{sec:prelim} establishes the analytic tools needed on $\Sp$, including the continuous embedding, the Poincar\'e inequality, the zero-average Sobolev inequality, and the cutoff estimates. Sections \ref{sec:B-program} and \ref{sec:Bhat-program} determine the admissible sets $\Bsph$ and $\Bhsph$. In Section \ref{sec:cayley-H} we derive the weighted Heisenberg formulation and prove the equivalence of admissible sets under the Cayley transform. Section \ref{sec:further-constraints} is devoted to constrained inequalities, with particular emphasis on the contrast between first-moment classes and coercive linear constraint classes. The final section records the subcritical consequences and discusses several directions for further study.

\section{Geometric background}\label{sec:background}

This section recalls the geometric framework underlying the subsequent analysis. We begin with the general pseudohermitian setting, then describe the two model spaces relevant to the paper, namely the Heisenberg group and the standard CR sphere, and finally record the Cayley transform that relates the compact and noncompact pictures.

Let $N$ be a smooth manifold of real dimension $2n+1$. A CR structure on $N$ is a complex rank-$n$ subbundle $T^{1,0}N\subset \C TN$ such that
\[
T^{1,0}N\cap T^{0,1}N=\{0\},
\qquad
[\Gamma(T^{1,0}N),\Gamma(T^{1,0}N)]\subset \Gamma(T^{1,0}N),
\]
where $T^{0,1}N=\overline{T^{1,0}N}$. The underlying real horizontal bundle is
\[
HN:=\Re\bigl(T^{1,0}N\oplus T^{0,1}N\bigr).
\]
If there exists a globally defined one-form $\theta$ such that $\ker \theta=HN$ and the Levi form
\[
L_\theta(V,\overline W):=-i\,\dd\theta(V,\overline W),
\qquad V,W\in T^{1,0}N,
\]
is positive definite, then $(N,T^{1,0}N,\theta)$ is called a strictly pseudoconvex pseudohermitian manifold. The Reeb vector field $T$ associated with $\theta$ is characterized by
\[
\theta(T)=1,
\qquad
\iota_T\dd\theta=0,
\]
so that
\[
TN=HN\oplus \R T.
\]
If $(Z_\alpha)$ is a local frame of $T^{1,0}N$ with dual coframe $(\theta^\alpha)$, then
\[
\dd\theta=i\,h_{\alpha\overline\beta}\,\theta^\alpha\wedge \theta^{\overline\beta},
\]
where $(h_{\alpha\overline\beta})$ is a positive definite Hermitian matrix. The associated pseudohermitian volume form is
\[
\theta\wedge (\dd\theta)^n.
\]
The Levi form induces a horizontal gradient $\nabla_b$, and the sub-Laplacian $\Delta_b$ is defined variationally by
\[
\int_N (\Delta_b u)\,v\,\theta\wedge(\dd\theta)^n
=
\int_N \langle \nabla_b u,\nabla_b v\rangle_\theta\,\theta\wedge(\dd\theta)^n,
\qquad v\in C_0^\infty(N).
\]
Since the Levi form is nondegenerate only on the horizontal bundle, $\Delta_b$ is subelliptic rather than elliptic; see
\cite{MR175149,MR837820}.

The basic noncompact model is the Heisenberg group
\[
\Hn=\C^n\times\R\simeq \R_x^n\times\R_y^n\times\R_t,
\]
equipped with the group law
\[
(z,t)\circ (z',t')
=
\bigl(z+z',\,t+t'+2\Im(z\cdot\overline{z'})\bigr)
\]
and the anisotropic dilations
\[
\delta_\lambda(z,t)=(\lambda z,\lambda^2 t),
\qquad \lambda>0.
\]
Its homogeneous dimension is
\[
Q=2n+2.
\]
The standard contact form on $\Hn$ is
\[
\theta_{\Hn}= \dd t + 2\sum_{j=1}^n (x_j\,\dd y_j-y_j\,\dd x_j),
\]
and the horizontal bundle is $H\Hn=\ker\theta_{\Hn}$. A convenient left-invariant horizontal frame is given by
\[
X_j=\partial_{x_j}+2y_j\partial_t,
\qquad
X_{n+j}=\partial_{y_j}-2x_j\partial_t,
\qquad 1\le j\le n,
\]
while the central direction is generated by $T=\partial_t$. In particular,
\[
[X_j,X_{n+j}]=-4\partial_t,
\]
so $\Hn$ is a step-two Carnot group. For a $C^1$ function $U$ on $\Hn$, the horizontal gradient is
\[
\nabla_{\Hn}U=(X_1U,\dots,X_{2n}U).
\]

To define fractional seminorms, one fixes a homogeneous norm $d_0$ on $\Hn$, that is, a continuous function $d_0:\Hn\to[0,\infty)$ such that
\[
d_0(\delta_\lambda\xi)=\lambda d_0(\xi)
\qquad\text{and}\qquad
d_0(\xi)=0 \Longleftrightarrow \xi=0.
\]
The Kor\'anyi norm is the standard choice; see
\cite{MR619983,MR3820500}.
The associated left-invariant distance is
\[
\rho(\xi,\eta)=d_0(\eta^{-1}\circ\xi).
\]
With respect to this distance, balls satisfy the homogeneous scaling law
\[
|B_r(\xi)|\simeq r^Q.
\]
For general background on the geometry and analysis of stratified groups we refer to
\cite{MR2363343,MR3469687,MR3966452}.

The compact model relevant to the present paper is the unit sphere
\[
\Sp=\bigl\{z\in \C^{n+1}:|z|=1\bigr\},
\]
endowed with its standard CR structure and pseudohermitian form
\[
\theta_0
=
\frac{i}{2}\sum_{j=1}^{n+1}\bigl(z^j\,\dd\overline z^j-\overline z^j\,\dd z^j\bigr)\Big|_{\Sp}.
\]
Its holomorphic tangent bundle is
\[
T^{1,0}\Sp=T^{1,0}\C^{n+1}\cap \C T\Sp.
\]
The standard tangential Cauchy--Riemann vector fields are
\[
T_j=\frac{\partial}{\partial z_j}-\overline z_j\sum_{k=1}^{n+1} z_k\frac{\partial}{\partial z_k},
\qquad
\overline T_j=\frac{\partial}{\partial \overline z_j}- z_j\sum_{k=1}^{n+1}\overline z_k\frac{\partial}{\partial \overline z_k},
\]
and the sub-Laplacian acting on functions can be written as
\[
\mathcal L
=
-\frac12\sum_{j=1}^{n+1}(T_j\overline T_j+\overline T_j T_j).
\]
The corresponding conformal sub-Laplacian is
\[
\mathcal D=\mathcal L+\frac{n^2}{4}.
\]
We denote the pseudohermitian volume form by
\[
\dd V_{\theta_0}=\theta_0\wedge(\dd\theta_0)^n.
\]

The $L^2$-theory on $\Sp$ is described by CR spherical harmonics. Let $\mathcal P$ be the space of complex-valued polynomials on $\C^{n+1}$ and let $\mathcal H\subset \mathcal P$ be the harmonic subspace. For $h\in\mathbb N_0$, let $\mathcal P_h$ denote the homogeneous polynomials of degree $h$, and set $\mathcal H_h=\mathcal P_h\cap\mathcal H$. If
\[
Z=\sum_{l=1}^{n+1} z_l\frac{\partial}{\partial z_l},
\qquad
\overline Z=\sum_{l=1}^{n+1}\overline z_l\frac{\partial}{\partial \overline z_l},
\]
then $\mathcal P_{j,k}$ denotes the polynomials of bidegree $(j,k)$ and
\[
\mathcal H_{j,k}:=\mathcal H\cap \mathcal P_{j,k}.
\]
Accordingly,
\[
\mathcal P_h=\bigoplus_{j+k=h}\mathcal P_{j,k},
\qquad
\mathcal H_h=\bigoplus_{j+k=h}\mathcal H_{j,k}.
\]
Restricting to the sphere yields finite-dimensional spaces $\mathcal H_{j,k}^{\Sp}$, and one has the orthogonal decomposition
\[
L^2(\Sp)=\overline{\bigoplus_{j,k\in\mathbb N_0}\mathcal H_{j,k}^{\Sp}}.
\]
The dimension formula is
\[
\dim \mathcal H_{j,k}^{\Sp}
=
\frac{(j+n-1)!(k+n-1)!(j+k+n)}{n!(n-1)!j!k!}.
\]
If $\{Y_{j,k}^\ell\}_{\ell=1}^{m_{j,k}}$ is an orthonormal basis of $\mathcal H_{j,k}^{\Sp}$, the associated zonal harmonic is defined by
\[
\Phi_{j,k}(\zeta,\eta)
=
\sum_{\ell=1}^{m_{j,k}}Y_{j,k}^\ell(\zeta)\overline{Y_{j,k}^\ell(\eta)}.
\]
Explicit expressions in terms of Jacobi polynomials are available in
\cite{MR2999037,MR1220225}. For our purposes, the main point is that the first-order spherical harmonics are precisely the restrictions of the ambient coordinate functions, and these are the functions that appear in the moment constraints discussed later.

The compact and noncompact models are related by the Cayley transform. Let
\[
S=(0,\dots,0,-1)\in \Sp
\]
be the south pole. The map
\[
\Psi_c:\Sp\setminus\{S\}\to\Hn
\]
is given by
\[
\Psi_c(\zeta)=\left(
\frac{\zeta_1}{1+\zeta_{n+1}},\dots,
\frac{\zeta_n}{1+\zeta_{n+1}},
\Re\!\left(i\frac{1-\zeta_{n+1}}{1+\zeta_{n+1}}\right)
\right),
\qquad \zeta\in \Sp\setminus\{S\},
\]
and its inverse is
\[
\Psi_c^{-1}(z,t)
=
\left(
\frac{2iz}{t+i(1+|z|^2)},
\frac{-t+i(1-|z|^2)}{t+i(1+|z|^2)}
\right).
\]
This map is a CR diffeomorphism between $\Sp\setminus\{S\}$ and $\Hn$. Its Jacobian is
\begin{equation}\label{eq:jacobian-cayley-background}
J_c(z,t)=\frac{2^{2n+1}}{\bigl((1+|z|^2)^2+t^2\bigr)^{n+1}}.
\end{equation}
Hence, for every integrable function $f$ on $\Sp$,
\begin{equation}\label{eq:change-var-background}
\int_{\Sp} f\,\dd V_{\theta_0}
=
\int_{\Hn}(f\circ \Psi_c^{-1})(z,t)\,J_c(z,t)\,\dd z\,\dd t.
\end{equation}
This change-of-variables formula is the starting point for transporting both Lebesgue norms and fractional kernels from the sphere to the Heisenberg group. Further background on the Cayley transform and on the relation among $\Sp$, $\Hn$, and the complex hyperbolic ball can be found in
\cite{MR2214654,MR3737629,MR2925386,MR2799361}.

\section{Fractional spaces and basic analytic tools}\label{sec:prelim}

We now introduce the fractional spaces used throughout the paper and collect the analytic facts on $\Sp$ that will be needed later. The local theory on the Heisenberg group serves as the model, while the global estimates on the compact sphere are obtained by finite localization together with the observation that constants are the only functions with vanishing fractional seminorm.

Let $1<p<\infty$ and $s\in(0,1)$. For a measurable function $U:\Hn\to\R$, we set
\[
[U]_{HW^{s,p}(\Hn)}^p
:=
\int_{\Hn}\int_{\Hn}
\frac{|U(\xi)-U(\eta)|^p}{\rho(\xi,\eta)^{Q+sp}}\,
\dd\xi\,\dd\eta,
\]
and define
\[
HW^{s,p}(\Hn)
:=
\bigl\{U\in L^p(\Hn):[U]_{HW^{s,p}(\Hn)}<\infty\bigr\}.
\]
If $\Omega\subset\Hn$ is open, the local spaces $HW^{s,p}(\Omega)$ and $HW_0^{s,p}(\Omega)$ are defined in the same way. These are the fractional Folland--Stein spaces considered, for example, in
\cite{MR3807591,kassymov2018,MR4091395}.

On the sphere we use the notation
\begin{equation}\label{eq:def-seminorm}
[u]_{s,p}^p
:=
\int_{\Sp}\int_{\Sp}
\frac{|u(\xi)-u(\eta)|^p}{d(\xi,\eta)^{Q+sp}}\,
\dd V_{\theta_0}(\xi)\,\dd V_{\theta_0}(\eta),
\end{equation}
and
\[
HW^{s,p}(\Sp)
:=
\bigl\{u\in L^p(\Sp):[u]_{s,p}<\infty\bigr\}.
\]
As in the introduction, the critical exponent is
\[
\qexp=\frac{Qp}{Q-sp}.
\]
The norm
\[
\|u\|_{HW^{s,p}(\Sp)}=\|u\|_{L^p(\Sp)}+[u]_{s,p}
\]
is equivalent to the more standard $\ell^p$-type norm, and since $\Sp$ is compact, different equivalent CR distances give rise to equivalent fractional spaces.

We first recall the local fractional Poincar\'e inequality on the Heisenberg group. Although we shall not use it directly later, it clarifies the local mechanism behind the global compact-manifold estimates.

\begin{proposition}\label{prop:local-poincare-Hn}
Let $p\ge1$, $s\in(0,1)$, let $\Omega\subset\Hn$ be open, and let $B_r(\xi_0)\Subset\Omega$. Then there exists a constant $C=C(n,p,s)>0$ such that every $U\in HW^{s,p}_{\mathrm{loc}}(\Omega)$ satisfies
\[
\int_{B_r(\xi_0)}|U-(U)_{B_r}|^p\,\dd\xi
\le
Cr^{sp}\int_{B_r(\xi_0)}\int_{B_r(\xi_0)}
\frac{|U(\xi)-U(\eta)|^p}{\rho(\xi,\eta)^{Q+sp}}\,\dd\xi\,\dd\eta,
\]
where
\[
(U)_{B_r}=|B_r|^{-1}\int_{B_r}U\,\dd\xi.
\]
\end{proposition}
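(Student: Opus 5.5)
The plan is to prove the estimate by the standard averaging argument (Jensen/Hölder), which needs nothing beyond the doubling-type volume control $|B_r|\simeq r^Q$ and a bound on distances inside the ball. Write $B=B_r(\xi_0)$. First use Jensen's inequality (valid since $p\ge1$) for the probability measure $|B|^{-1}\dd\eta$ on $B$:
\[
|U(\xi)-(U)_{B}|^p=\Bigl|\,|B|^{-1}\int_B\bigl(U(\xi)-U(\eta)\bigr)\dd\eta\Bigr|^p\le |B|^{-1}\int_B|U(\xi)-U(\eta)|^p\,\dd\eta .
\]
Integrating in $\xi\in B$ then gives
\[
\int_B|U-(U)_B|^p\,\dd\xi\le |B|^{-1}\int_B\int_B|U(\xi)-U(\eta)|^p\,\dd\xi\,\dd\eta .
\]
The assumption $U\in HW^{s,p}_{\mathrm{loc}}(\Omega)$ together with $B\Subset\Omega$ guarantees that $U\in L^p(B)$, so $(U)_B$ is finite. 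If the right-hand side of the proposition is infinite there is nothing to prove.

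Next, insert the kernel. Since $\rho$ is a quasi-distance (the Korányi distance is a genuine distance; for a general homogeneous norm it is a quasi-distance with some constant $c_0$), for $\xi,\eta\in B$ we have $\rho(\xi,\eta)\le c_0\bigl(\rho(\xi,\xi_0)+\rho(\xi_0,\eta)\bigr)\le 2c_0 r$. Hence
\[
1\le (2c_0r)^{Q+sp}\rho(\xi,\eta)^{-(Q+sp)} .
\]
Multiplying the integrand by this factor yields the bound
\[
|B|^{-1}(2c_0r)^{Q+sp}\int_B\int_B\frac{|U(\xi)-U(\eta)|^p}{\rho(\xi,\eta)^{Q+sp}}\,\dd\xi\,\dd\eta .
\]

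Finally, use exact homogeneity of Haar (Lebesgue) measure under dilations together with left invariance: $|B_r(\xi_0)|=|B_1(0)|\,r^Q$. This holds because $\delta_r$ has Jacobian $r^{2n}\cdot r^2=r^Q$ and left translations preserve $\dd\xi$. Consequently $|B|^{-1}(2c_0r)^{Q+sp}=(2c_0)^{Q+sp}|B_1(0)|^{-1}r^{sp}$, and the constant $C=(2c_0)^{Q+sp}/|B_1(0)|$ depends only on $n$, $p$, $s$ (and the fixed homogeneous norm).

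There is no serious obstacle here. The only points needing care are the use of $\rho(\xi,\eta)=d_0(\eta^{-1}\circ\xi)$ and the triangle-type inequality for $d_0$, which I will quote from the standard theory of homogeneous norms, and the exact scaling of ball volumes rather than merely $|B_r|\simeq r^Q$.
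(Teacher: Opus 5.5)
Your proposal is correct and follows the paper's proof essentially step for step. The paper's steps are Jensen's inequality on $B$, the bound $\rho(\xi,\eta)<2r$ for $\xi,\eta\in B$, and the exact scaling $|B_r(\xi_0)|=r^Q|B_1(0)|$, which give $C=2^{Q+sp}/|B_1(0)|$. Your quasi-triangle constant $c_0$ is a harmless generalization of the paper's use of the genuine triangle inequality ($c_0=1$).
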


\begin{proof}
Write
\[
B:=B_r(\xi_0).
\]
Since $U\in HW^{s,p}_{\mathrm{loc}}(\Omega)$ and $B\Subset \Omega$, the quantity
\[
\int_B\int_B
\frac{|U(\xi)-U(\eta)|^p}{\rho(\xi,\eta)^{Q+sp}}\,\dd\xi\,\dd\eta
\]
is finite.

For almost every $\xi\in B$, we have
\[
U(\xi)-(U)_B=\frac1{|B|}\int_B\bigl(U(\xi)-U(\eta)\bigr)\,\dd\eta.
\]
Since $p\ge1$, Jensen's inequality yields
\[
|U(\xi)-(U)_B|^p
=
\left|\frac1{|B|}\int_B\bigl(U(\xi)-U(\eta)\bigr)\,\dd\eta\right|^p
\le
\frac1{|B|}\int_B |U(\xi)-U(\eta)|^p\,\dd\eta.
\]
Integrating with respect to $\xi$ over $B$, we obtain
\begin{equation}\label{eq:local-poincare-step1}
\int_B |U-(U)_B|^p\,\dd\xi
\le
\frac1{|B|}\int_B\int_B |U(\xi)-U(\eta)|^p\,\dd\xi\,\dd\eta.
\end{equation}

We now exploit the geometry of Heisenberg balls. Since $\rho$ is left-invariant and homogeneous,
\[
B_r(\xi_0)=\xi_0\circ \delta_r(B_1(0)),
\]
and therefore
\[
|B|=|B_r(\xi_0)|=r^Q |B_1(0)|.
\]
Moreover, for every $\xi,\eta\in B_r(\xi_0)$, the triangle inequality gives
\[
\rho(\xi,\eta)\le \rho(\xi,\xi_0)+\rho(\xi_0,\eta)<2r.
\]
Hence
\[
\frac1{|B|}
=
\frac1{|B_1(0)|\,r^Q}
\le
\frac{2^{Q+sp}}{|B_1(0)|}\,
r^{sp}\,\rho(\xi,\eta)^{-Q-sp},
\qquad \xi,\eta\in B.
\]
Substituting this estimate into \eqref{eq:local-poincare-step1}, we get
\begin{align*}
\int_B |U-(U)_B|^p\,\dd\xi
&\le
\frac{2^{Q+sp}}{|B_1(0)|}\,
r^{sp}
\int_B\int_B
\frac{|U(\xi)-U(\eta)|^p}{\rho(\xi,\eta)^{Q+sp}}
\,\dd\xi\,\dd\eta.
\end{align*}
Therefore
\[
\int_{B_r(\xi_0)}|U-(U)_{B_r}|^p\,\dd\xi
\le
C\,r^{sp}\int_{B_r(\xi_0)}\int_{B_r(\xi_0)}
\frac{|U(\xi)-U(\eta)|^p}{\rho(\xi,\eta)^{Q+sp}}
\,\dd\xi\,\dd\eta,
\]
with
\[
C=\frac{2^{Q+sp}}{|B_1(0)|}.
\]
Since the homogeneous distance is fixed throughout the paper, $|B_1(0)|$ is an absolute constant; thus we may write simply $C=C(n,p,s)$.
\end{proof}

The next estimate is a basic cutoff inequality. On a compact manifold the natural bound necessarily involves both the fractional seminorm and the $L^p$ norm.

\begin{proposition}\label{prop:product-estimate}
Let $\phi\in C^1(\Sp)$. Then there exist constants $C_1=C_1(\phi,p,s)$ and $C_2=C_2(\phi,p,s)$ such that
\begin{equation}\label{eq:product-estimate}
[\phi u]_{s,p}^p
\le
C_1[u]_{s,p}^p+C_2\|u\|_{L^p(\Sp)}^p
\qquad\text{for all }u\in HW^{s,p}(\Sp).
\end{equation}
\end{proposition}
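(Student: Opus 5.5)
We split the difference pointwise in the standard way,
\[
\phi(\xi)u(\xi)-\phi(\eta)u(\eta)=\phi(\xi)\bigl(u(\xi)-u(\eta)\bigr)+u(\eta)\bigl(\phi(\xi)-\phi(\eta)\bigr),
\]
and apply the elementary inequality $|a+b|^p\le 2^{p-1}(|a|^p+|b|^p)$. After inserting this into the double integral defining $[\phi u]_{s,p}^p$, we obtain two terms. The first is bounded by $2^{p-1}\|\phi\|_{L^\infty}^p[u]_{s,p}^p$, since $\phi$ is continuous on the compact sphere. This already gives the $C_1$ term with $C_1=2^{p-1}\|\phi\|_\infty^p$.

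The second term is
\[
2^{p-1}\int_{\Sp}|u(\eta)|^p\left(\int_{\Sp}\frac{|\phi(\xi)-\phi(\eta)|^p}{d(\xi,\eta)^{Q+sp}}\,\dd V_{\theta_0}(\xi)\right)\dd V_{\theta_0}(\eta).
\]
It suffices to show that the inner integral is bounded uniformly in $\eta$. For this we need a Lipschitz bound for $\phi$ with respect to the CR distance. Since $\phi\in C^1(\Sp)$ and the sphere is compact, one has $|\phi(\xi)-\phi(\eta)|\le L\,|\xi-\eta|_{\C^{n+1}}$. The Euclidean distance is dominated by a constant multiple of the CR distance $d$; for the Korányi-type gauge one actually has $|\xi-\eta|\lesssim d(\xi,\eta)$ on the compact sphere, and only the direction Euclidean $\lesssim$ CR is needed. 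Hence $|\phi(\xi)-\phi(\eta)|\le L' d(\xi,\eta)$. We also use $|\phi(\xi)-\phi(\eta)|\le 2\|\phi\|_\infty$.

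We then split the inner integral into the regions $d(\xi,\eta)<1$ and $d(\xi,\eta)\ge1$. On the far region the integrand is bounded by $(2\|\phi\|_\infty)^p$, giving at most $(2\|\phi\|_\infty)^p\om$. On the near region the integrand is bounded by $L'^p d(\xi,\eta)^{p-Q-sp}$. To integrate this we use the Ahlfors $Q$-regularity of $(\Sp,d,\dd V_{\theta_0})$, namely $|B_r(\eta)|\simeq r^Q$ for $r\le\mathrm{diam}$, uniformly in $\eta$. This follows from the Cayley transform and the Heisenberg scaling law $|B_r|\simeq r^Q$ recalled in Section \ref{sec:background}, or from the homogeneity of the sphere under $U(n+1)$. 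A dyadic decomposition into the shells $2^{-k-1}\le d<2^{-k}$ then gives
\[
\int_{d(\xi,\eta)<1} d(\xi,\eta)^{p(1-s)-Q}\,\dd V_{\theta_0}(\xi)\lesssim\sum_{k\ge0}2^{-kp(1-s)}<\infty,
\]
because $s<1$. This bound is uniform in $\eta$, and it yields $C_2$.

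The main obstacle is justifying the two geometric inputs, namely the Lipschitz comparison between Euclidean and CR distances and the uniform volume bound $|B_r(\eta)|\lesssim r^Q$. Both are standard. I would cite rotation invariance under $U(n+1)$ to reduce to $\eta$ at the north pole, and then use the Cayley chart, where $J_c$ is bounded above and below near the origin and $d$ is comparable to the Heisenberg gauge.
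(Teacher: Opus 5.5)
Your proposal is correct and follows the paper's proof step for step. It uses the same splitting of $\phi(\xi)u(\xi)-\phi(\eta)u(\eta)$ and the same bound $2^{p-1}\|\phi\|_{L^\infty}^p[u]_{s,p}^p$ for the first term; for the second term, a Lipschitz bound $|\phi(\xi)-\phi(\eta)|\lesssim d(\xi,\eta)$ reduces everything to the uniform-in-$\eta$ integrability of $d(\xi,\eta)^{-(Q-(1-s)p)}$. The only difference is that you justify that integrability explicitly (Euclidean $\lesssim$ CR distance, Ahlfors $Q$-regularity, dyadic shells), which the paper merely asserts; your near/far split is harmless but unnecessary, since $d$ is bounded on $\Sp$.
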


\begin{proof}
For every $\xi,\eta\in\Sp$ we write
\[
\phi(\xi)u(\xi)-\phi(\eta)u(\eta)
=
\phi(\xi)\bigl(u(\xi)-u(\eta)\bigr)
+
u(\eta)\bigl(\phi(\xi)-\phi(\eta)\bigr).
\]
Hence
\begin{align*}
|\phi(\xi)u(\xi)-\phi(\eta)u(\eta)|^p
&\le
2^{p-1}\|\phi\|_{L^\infty}^p\,|u(\xi)-u(\eta)|^p  \\
&\quad
+2^{p-1}|u(\eta)|^p\,|\phi(\xi)-\phi(\eta)|^p.
\end{align*}
Since $\phi$ is smooth and $\Sp$ is compact, $\phi$ is Lipschitz with respect to the fixed CR distance $d$; thus
\[
|\phi(\xi)-\phi(\eta)|\le \Lip(\phi)\,d(\xi,\eta).
\]
Substituting this estimate into \eqref{eq:def-seminorm} gives
\begin{align*}
[\phi u]_{s,p}^p
&\le
2^{p-1}\|\phi\|_{L^\infty}^p [u]_{s,p}^p \\
&\quad
+2^{p-1}\Lip(\phi)^p
\int_{\Sp}\int_{\Sp}
\frac{|u(\eta)|^p}{d(\xi,\eta)^{Q-(1-s)p}}\,
\dd V_{\theta_0}(\xi)\,\dd V_{\theta_0}(\eta).
\end{align*}
Because $0<s<1$, one has
\[
Q-(1-s)p<Q,
\]
so the kernel $d(\xi,\eta)^{-(Q-(1-s)p)}$ is integrable near the diagonal on the $Q$-dimensional compact space $\Sp$. Therefore
\[
\sup_{\eta\in\Sp}
\int_{\Sp}\frac{\dd V_{\theta_0}(\xi)}{d(\xi,\eta)^{Q-(1-s)p}}
<\infty,
\]
and the second term is bounded by $C_2\|u\|_{L^p(\Sp)}^p$. This proves \eqref{eq:product-estimate}.
\end{proof}

We now turn to the global fractional Sobolev theory on the sphere. The key point is that the compactness of $\Sp$ allows one to patch local Heisenberg estimates together through a finite atlas.

\begin{proposition}\label{prop:cont-embed}
There exists a constant $C_S=C_S(n,p,s)>0$ such that
\begin{equation}\label{eq:cont-embed}
\|u\|_{L^{\qexp}(\Sp)}
\le
C_S\bigl([u]_{s,p}+\|u\|_{L^p(\Sp)}\bigr)
\qquad\text{for all }u\in HW^{s,p}(\Sp).
\end{equation}
\end{proposition}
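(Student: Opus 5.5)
The plan is to reduce Proposition~\ref{prop:cont-embed} to the fractional Sobolev inequality on $\Hn$. We use a finite partition of unity, the cutoff estimate of Proposition~\ref{prop:product-estimate}, and local comparability of the sphere's CR geometry with the Heisenberg geometry. The one external input is the known fractional Sobolev embedding on the Heisenberg group (see \cite{kassymov2018,MR4091395}):
\[
\|U\|_{L^{\qexp}(\Hn)}\le C\,[U]_{HW^{s,p}(\Hn)}
\quad\text{for all } U\in HW^{s,p}(\Hn)\text{ with compact support}.
\]

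First, cover $\Sp$ by two open sets whose closures avoid one point each. Take $\Omega_1=\Sp\setminus\overline{B(S,\delta)}$ and $\Omega_2=\Sp\setminus\overline{B(N,\delta)}$, where $N=(0,\dots,0,1)$. On $\Omega_1$ use the Cayley transform $\Psi_c$. On $\Omega_2$ use $\Psi_c\circ R$, with $R$ a unitary map exchanging $N$ and $S$; since $R$ preserves $\theta_0$ and can be taken to preserve $d$, this costs nothing. Choose a $C^1$ partition of unity $\phi_1+\phi_2=1$ subordinate to the cover, so that $u=\phi_1u+\phi_2u$ and
\[
\|u\|_{L^{\qexp}}\le\sum_i\|\phi_iu\|_{L^{\qexp}}.
\]

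Next, fix $i=1$ and set $v=u_1\circ\Psi_c^{-1}$ with $u_1=\phi_1u$. Then $v$ is supported in a fixed compact set $K=\Psi_c(\operatorname{supp}\phi_1)\subset\Hn$. On $\Psi_c^{-1}(K')$, for a slightly larger compact $K'$, the Jacobian $J_c$ from \eqref{eq:jacobian-cayley-background} is bounded above and below. Moreover the distances $d(\Psi_c^{-1}x,\Psi_c^{-1}y)$ and $\rho(x,y)$ are comparable there. This holds because $\Psi_c$ is a smooth contact (CR) diffeomorphism, and any two Carnot--Carathéodory-type distances are bi-Lipschitz equivalent on compacts under such maps. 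With \eqref{eq:change-var-background}, this gives
\[
\|u_1\|_{L^{\qexp}(\Sp)}\le C\|v\|_{L^{\qexp}(\Hn)} .
\]

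The seminorm of $v$ on $\Hn\times\Hn$ splits into the part over $K'\times K'$ and the tail where one point lies outside $K'$. The $K'\times K'$ part is bounded by $C[u_1]_{s,p}^p$ using the comparability just described. For the tail, $v$ vanishes outside $K$, so only pairs with $x\in K$, $y\notin K'$ contribute, giving
\[
2\int_K|v(x)|^p\int_{\rho(x,y)\ge\epsilon}\rho(x,y)^{-Q-sp}\,\dd y\,\dd x\le C\|v\|_{L^p}^p\le C\|u_1\|_{L^p}^p .
\]
Applying the Heisenberg Sobolev inequality and then Proposition~\ref{prop:product-estimate} yields
\[
\|u_1\|_{L^{\qexp}}\le C\bigl([u_1]_{s,p}+\|u_1\|_{L^p}\bigr)\le C\bigl([u]_{s,p}+\|u\|_{L^p}\bigr).
\]
Summing over $i$ finishes the proof.

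The main obstacle is the local bi-Lipschitz comparability of $d$ (pulled back) with the Korányi distance $\rho$ on compact subsets of $\Hn$. I would handle it by taking $d$ to be the Carnot--Carathéodory distance of $\theta_0$, or equivalently the Korányi-type gauge $|1-\langle\zeta,\eta\rangle|^{1/2}$. Under the Cayley transform this gauge is explicitly $|1-\langle\zeta,\eta\rangle|=\tfrac12\,\rho(x,y)^2\,J_c(x)^{1/(2Q)}J_c(y)^{1/(2Q)}$ up to normalization, which is the standard Korányi identity. Comparability on compacts is then immediate from the two-sided bounds on $J_c$. Any other CR distance is equivalent to this one by compactness, as stated in the paper.
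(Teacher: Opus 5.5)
Your proposal is correct and is essentially the paper's localization argument: a finite atlas and partition of unity, local comparability of $d$ and $\dd V_{\theta_0}$ with $\rho$ and Lebesgue measure, a Heisenberg Sobolev inequality, then Proposition~\ref{prop:product-estimate} and summation over charts. It is more explicit than the paper's sketch in using $\Psi_c$ and $\Psi_c\circ R$ as the charts, and in giving the tail estimate that justifies passing from the sphere seminorm of $\phi_1u$ to the full $\Hn$ seminorm of its zero extension.

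One harmless slip: in your Kor\'anyi identity the exponent on each $J_c$ factor should be $1/Q$; the value $1/(2Q)$ is the right one for $|1-\langle\zeta,\eta\rangle|^{1/2}$. This changes nothing, since you only use two-sided bounds on $J_c$ over compact sets.
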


\begin{proof}
We sketch the standard localization argument. Choose a finite atlas $\{(U_j,\Phi_j)\}_{j=1}^N$ on $\Sp$ and a subordinate partition of unity $\{\chi_j\}_{j=1}^N$ with $\chi_j\in C_c^\infty(U_j)$. Since $\Sp$ is compact, each chart image $\Phi_j(U_j)$ is a bounded domain in a Heisenberg coordinate patch, and on each chart the pull-backs of the CR distance and the pseudohermitian volume are uniformly comparable with a Heisenberg distance and Lebesgue measure. More precisely, after shrinking charts if necessary, there exists a constant $C\ge1$ such that for all $\xi,\eta\in U_j$,
\[
C^{-1}\rho\bigl(\Phi_j(\xi),\Phi_j(\eta)\bigr)
\le d(\xi,\eta)\le
C\,\rho\bigl(\Phi_j(\xi),\Phi_j(\eta)\bigr),
\]
and
\[
C^{-1}\,\dd x
\le (\Phi_j)_*(\dd V_{\theta_0})\le
C\,\dd x
\qquad\text{on }\Phi_j(U_j).
\]
Applying the local critical fractional Sobolev inequality on bounded Heisenberg domains to
\[
u_j:=(\chi_j u)\circ \Phi_j^{-1},
\]
and then using Proposition \ref{prop:product-estimate} to estimate the localized seminorm $[\chi_j u]_{s,p}$ in terms of $[u]_{s,p}$ and $\|u\|_{L^p(\Sp)}$, we obtain
\[
\|\chi_j u\|_{L^{\qexp}(\Sp)}
\le
C_j\bigl([u]_{s,p}+\|u\|_{L^p(\Sp)}\bigr)
\]
for each $j$. Summing over the finitely many charts and using the bounded overlap of the partition of unity yields \eqref{eq:cont-embed}.
\end{proof}

\begin{proposition}\label{prop:compact-embed}
For every $1\le r<\qexp$, the embedding
\[
HW^{s,p}(\Sp)\hookrightarrow L^r(\Sp)
\]
is compact.
\end{proposition}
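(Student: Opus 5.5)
The plan is to prove compactness first for $r=p$ and then upgrade to all $r<\qexp$ by interpolation against the continuous embedding of Proposition \ref{prop:cont-embed}.

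\textbf{Reduction to $r=p$.} Suppose $HW^{s,p}(\Sp)\hookrightarrow L^p(\Sp)$ is compact. Take $1\le r<\qexp$. If $r\le p$, the finite measure of $\Sp$ and H\"older's inequality give $\|u\|_{L^r}\le C\|u\|_{L^p}$. So every $L^p$-Cauchy sequence is also $L^r$-Cauchy. If $p<r<\qexp$, write $1/r=\lambda/p+(1-\lambda)/\qexp$ with $\lambda\in(0,1]$. Then
\[
\|u_k-u_m\|_{L^r}\le \|u_k-u_m\|_{L^p}^{\lambda}\|u_k-u_m\|_{L^{\qexp}}^{1-\lambda}.
\]
For a bounded sequence $(u_k)$ in $HW^{s,p}(\Sp)$, Proposition \ref{prop:cont-embed} bounds the second factor uniformly. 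Hence any subsequence that converges in $L^p$ also converges in $L^r$.

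\textbf{Compactness in $L^p$ via Riesz--Kolmogorov-type smoothing.} Let $(u_k)$ be bounded in $HW^{s,p}(\Sp)$, with $\|u_k\|_{L^p}+[u_k]_{s,p}\le M$. For $\varepsilon>0$ define the averaging operator
\[
T_\varepsilon u(\xi):=\frac{1}{|B_\varepsilon(\xi)|}\int_{B_\varepsilon(\xi)}u\,\dd V_{\theta_0},
\]
where $B_\varepsilon$ is the ball for the CR distance $d$. Ahlfors regularity on the compact sphere gives $|B_\varepsilon(\xi)|\simeq\varepsilon^Q$ uniformly in $\xi$. By Jensen's inequality, exactly as in the proof of Proposition \ref{prop:local-poincare-Hn},
\[
|u(\xi)-T_\varepsilon u(\xi)|^p\le \frac{C}{\varepsilon^Q}\int_{B_\varepsilon(\xi)}|u(\xi)-u(\eta)|^p\,\dd V_{\theta_0}(\eta)\le C\varepsilon^{sp}\int_{B_\varepsilon(\xi)}\frac{|u(\xi)-u(\eta)|^p}{d(\xi,\eta)^{Q+sp}}\,\dd V_{\theta_0}(\eta).
\]
Integrating in $\xi$ yields
\[
\|u-T_\varepsilon u\|_{L^p}\le C\varepsilon^{s}[u]_{s,p}\le CM\varepsilon^s,
\]
uniformly in $k$.

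For fixed $\varepsilon$, the family $\{T_\varepsilon u_k\}_k$ is uniformly bounded, since $|T_\varepsilon u|\le C\varepsilon^{-Q/p}\|u\|_{L^p}$. It is also equicontinuous. For nearby points, $|T_\varepsilon u(\xi)-T_\varepsilon u(\xi')|$ is controlled by $\|u\|_{L^1}$ times
\[
\bigl|\,|B_\varepsilon(\xi)|^{-1}\mathbf 1_{B_\varepsilon(\xi)}-|B_\varepsilon(\xi')|^{-1}\mathbf 1_{B_\varepsilon(\xi')}\,\bigr|
\]
integrated over $\Sp$. This tends to $0$ as $d(\xi,\xi')\to0$. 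To get that, use continuity of $\xi\mapsto|B_\varepsilon(\xi)|$ and the fact that the measure of the symmetric difference of the balls tends to $0$. The cleanest route is to take $d$ to be a $U(n+1)$-invariant distance, such as the Kor\'anyi-type gauge $|1-\langle\xi,\eta\rangle|^{1/2}$, so that $|B_\varepsilon(\xi)|$ is independent of $\xi$. This choice is allowed because equivalent distances give equivalent seminorms. Arzel\`a--Ascoli then gives a subsequence along which $T_\varepsilon u_k$ converges uniformly, hence in $L^p$.

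Combining these pieces, $\{u_k\}$ is totally bounded in $L^p$. Given $\delta>0$, choose $\varepsilon$ with $CM\varepsilon^s<\delta/3$. Then $\{u_k\}$ lies in the $\delta/3$-neighbourhood of the relatively compact set $\{T_\varepsilon u_k\}$. A diagonal argument over $\varepsilon=1/j$ then extracts a subsequence that is Cauchy in $L^p$.

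The main obstacle is the equicontinuity of $T_\varepsilon u_k$, which rests on the regularity of the balls of the chosen CR distance. I would resolve it with the invariant gauge distance as described. As a fallback, replace indicator averaging by convolution against a continuous kernel $\varepsilon^{-Q}\psi(d(\xi,\eta)/\varepsilon)$ normalized pointwise. This makes continuity in $\xi$ immediate, and the same Jensen estimate still holds.
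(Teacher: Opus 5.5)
Your argument is correct, but it takes a different route from the paper. The paper localizes. It takes a finite atlas and a partition of unity, uses Proposition~\ref{prop:product-estimate} to see that the pieces $(\chi_j u_k)\circ\Phi_j^{-1}$ are bounded in fractional spaces on bounded Heisenberg domains, and then invokes an external compact embedding theorem on such domains, which gives every $r<\qexp$ at once. A diagonal argument over the finitely many charts finishes the proof.

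You work intrinsically on $\Sp$ with a Fr\'echet--Kolmogorov argument. Your only inputs are Ahlfors $Q$-regularity of $(\Sp,d,\dd V_{\theta_0})$ and the Jensen bound $\|u-T_\varepsilon u\|_{L^p}\le C\varepsilon^s[u]_{s,p}$. You then recover $p<r<\qexp$ by interpolating against Proposition~\ref{prop:cont-embed}.

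What your route buys:
\begin{itemize}
\item It needs no chart comparability of $d$ and $\dd V_{\theta_0}$ with the Heisenberg structure, and no outside citation.
\item It works verbatim on any compact Ahlfors-regular metric measure space.
\item The case $r=p$, which is the only one used later (Proposition~\ref{prop:poincare}, Theorem~\ref{thm:general-constraint}), becomes fully self-contained.
\end{itemize}
The cost is that for $r>p$ you still rely on the critical embedding, which the paper itself obtains by localization. The paper's cited local theorem handles all subcritical $r$ directly.

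One inequality in your equicontinuity step needs repair. Write $k_\xi:=|B_\varepsilon(\xi)|^{-1}\mathbf 1_{B_\varepsilon(\xi)}$. The bound of $|T_\varepsilon u(\xi)-T_\varepsilon u(\xi')|$ by $\|u\|_{L^1}\int_{\Sp}|k_\xi-k_{\xi'}|$ is not valid: take $u$ concentrated on the symmetric difference of the two balls. Since only $\|u_k\|_{L^p}\le M$ is available, use H\"older instead:
\[
|T_\varepsilon u(\xi)-T_\varepsilon u(\xi')|\le\|u\|_{L^p}\,\|k_\xi-k_{\xi'}\|_{L^{p'}}.
\]
With the $U(n+1)$-invariant gauge, the two balls have equal volume, so $\|k_\xi-k_{\xi'}\|_{L^{p'}}=|B_\varepsilon|^{-1}\,|B_\varepsilon(\xi)\triangle B_\varepsilon(\xi')|^{1/p'}$. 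This tends to $0$ because $p'<\infty$ and because the level sets $\{\eta:|1-\langle\xi,\eta\rangle|=\varepsilon^2\}$ are null, so dominated convergence applies.

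For your continuous-kernel fallback, the bound $\|u\|_{L^1}\sup_\eta|k_\xi(\eta)-k_{\xi'}(\eta)|$ is correct and suffices, since the kernel is uniformly continuous and its normalization is bounded below by $c\varepsilon^Q$. Either way, your conclusion stands.
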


\begin{proof}
We again use a standard finite localization argument. Let $(u_k)$ be bounded in $HW^{s,p}(\Sp)$. Using the same atlas and partition of unity as above, the cutoff functions
\[
u_{k,j}:=(\chi_j u_k)\circ\Phi_j^{-1}
\]
form bounded sequences in the corresponding fractional spaces on bounded Heisenberg domains, thanks to Proposition \ref{prop:product-estimate}. By the local compact embedding below the critical exponent on bounded domains, see
\cite{MR4721790},
each sequence $(u_{k,j})_k$ admits a subsequence converging strongly in $L^r$. Since only finitely many charts are involved, a diagonal argument yields a subsequence, still denoted $(u_k)$, such that each localized piece $\chi_j u_k$ converges strongly in $L^r(\Sp)$. Summing over $j$ then shows that $u_k$ converges strongly in $L^r(\Sp)$.
\end{proof}

The next simple lemma identifies the kernel of the fractional seminorm.

\begin{lemma}\label{lem:seminorm-zero}
If $u\in HW^{s,p}(\Sp)$ and $[u]_{s,p}=0$, then $u$ is constant almost everywhere on $\Sp$.
\end{lemma}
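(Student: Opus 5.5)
The plan is to read off constancy directly from the vanishing of the double integral, without any chart argument. Since $[u]_{s,p}=0$, the integrand in \eqref{eq:def-seminorm} is nonnegative and integrates to zero. Hence
\[
\frac{|u(\xi)-u(\eta)|^p}{d(\xi,\eta)^{Q+sp}}=0
\qquad\text{for }\dd V_{\theta_0}\otimes\dd V_{\theta_0}\text{-a.e. }(\xi,\eta)\in\Sp\times\Sp .
\]
The diagonal $\{\xi=\eta\}$ has product measure zero, because each slice $\{\eta\}$ is a $\dd V_{\theta_0}$-null set. Off the diagonal, the denominator $d(\xi,\eta)^{Q+sp}$ is finite and positive. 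It follows that $u(\xi)=u(\eta)$ for almost every pair $(\xi,\eta)$.

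Next I would apply Fubini--Tonelli to the null set
\[
E:=\{(\xi,\eta):u(\xi)\neq u(\eta)\}.
\]
This set is measurable since $u$ is measurable. From $(\dd V_{\theta_0}\otimes\dd V_{\theta_0})(E)=0$ we get that for a.e. $\eta_0\in\Sp$ the slice $E^{\eta_0}=\{\xi:u(\xi)\neq u(\eta_0)\}$ is null. Fix one such $\eta_0$ and set $c:=u(\eta_0)$. Then $u=c$ almost everywhere on $\Sp$.

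As a remark, one could alternatively set $c=\om^{-1}\int_{\Sp}u\,\dd V_{\theta_0}$, which is finite because $u\in L^p$ on a finite-measure space. Jensen's inequality as in Proposition \ref{prop:local-poincare-Hn} then gives
\[
\int_{\Sp}|u-c|^p\,\dd V_{\theta_0}\le \om^{-1}\int\!\!\int|u(\xi)-u(\eta)|^p\,\dd V_{\theta_0}\,\dd V_{\theta_0}
\le \om^{-1}(\operatorname{diam}\Sp)^{Q+sp}[u]_{s,p}^p=0.
\]
This uses boundedness of $d$ on the compact sphere, so that $d(\xi,\eta)^{-(Q+sp)}\ge (\operatorname{diam}\Sp)^{-(Q+sp)}$. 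I would actually present this quantitative version, since it is cleaner and anticipates the Poincaré inequality to come.

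There is no real obstacle here. The only points to check are the measurability and null-diagonal details in the first route, or the finiteness of $\operatorname{diam}\Sp$ with respect to $d$ in the second. The latter holds because $d$ is a continuous distance on a compact space.
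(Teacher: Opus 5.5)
Your proof is correct. Your first route is the paper's argument: the vanishing of the double integral gives $u(\xi)=u(\eta)$ for a.e.\ $(\xi,\eta)$, and Fubini then produces the constant. You state the choice of $\eta_0$ more accurately than the paper does. What is needed is that $\eta_0$ lie in the full-measure set of null slices, as you say, not that it be a Lebesgue point.

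The quantitative route you prefer to present is genuinely different. It is the proof of Proposition~\ref{prop:local-poincare-Hn} run with the whole sphere in place of a ball, with $d\le\operatorname{diam}\Sp$ playing the role of $\rho<2r$. It yields more than the lemma: for every $u\in HW^{s,p}(\Sp)$,
\[
\|u-\bar u\|_{L^p(\Sp)}\le \om^{-1/p}(\operatorname{diam}\Sp)^{(Q+sp)/p}\,[u]_{s,p}.
\]
This is Proposition~\ref{prop:poincare} with an explicit constant $C_P$. The contradiction argument given there, and its reliance on the compact embedding of Proposition~\ref{prop:compact-embed}, could therefore be dropped.

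The paper's Fubini argument is purely qualitative. It uses only that the kernel is positive and finite off the diagonal and that the diagonal is null. It needs neither a bounded distance nor finite total measure, nor $u\in L^1$. It therefore applies verbatim to $HW^{s,p}(\Hn)$ or to arbitrary open sets, where your diameter-and-average argument is unavailable.
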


\begin{proof}
From \eqref{eq:def-seminorm} we infer that
\[
u(\xi)=u(\eta)
\qquad\text{for almost every }(\xi,\eta)\in \Sp\times\Sp.
\]
Fixing a Lebesgue point $\eta$ of $u$ and applying Fubini's theorem, we conclude that $u$ agrees almost everywhere with the constant $u(\eta)$.
\end{proof}

On the compact sphere, the failure of homogeneous control on the whole space is entirely due to the presence of constants. Once the average is removed, one recovers a genuine Poincar\'e estimate.

\begin{proposition}\label{prop:poincare}
There exists a constant $C_P=C_P(n,p,s)>0$ such that
\begin{equation}\label{eq:poincare}
\|u-\bar u\|_{L^p(\Sp)}
\le
C_P[u]_{s,p}
\qquad\text{for all }u\in HW^{s,p}(\Sp),
\end{equation}
where
\[
\bar u:=\frac{1}{\om}\int_{\Sp}u\,\dd V_{\theta_0}.
\]
\end{proposition}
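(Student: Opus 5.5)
Because the sphere is compact and has finite diameter, a direct Jensen argument is available; the compactness method through Lemma \ref{lem:seminorm-zero} and Proposition \ref{prop:compact-embed} is kept only as a fallback.

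\emph{Direct argument.} We proceed exactly as in the proof of Proposition \ref{prop:local-poincare-Hn}, with the ball replaced by the whole sphere. For almost every $\xi$ we write
\[
u(\xi)-\bar u=\om^{-1}\int_{\Sp}\bigl(u(\xi)-u(\eta)\bigr)\dd V_{\theta_0}(\eta).
\]
Jensen's inequality (valid since $p\ge1$) then gives
\[
\|u-\bar u\|_{L^p(\Sp)}^p\le \om^{-1}\int_{\Sp}\int_{\Sp}|u(\xi)-u(\eta)|^p\,\dd V_{\theta_0}(\xi)\,\dd V_{\theta_0}(\eta).
\]
Next let $D:=\sup_{\xi,\eta}d(\xi,\eta)$, which is finite because $d$ is continuous on the compact set $\Sp\times\Sp$. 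Then $1\le D^{Q+sp}d(\xi,\eta)^{-Q-sp}$ for $\xi\ne\eta$, and the diagonal has measure zero. Inserting this bound yields
\[
\|u-\bar u\|_{L^p(\Sp)}^p\le \om^{-1}D^{Q+sp}[u]_{s,p}^p,
\]
so the statement holds with $C_P=\om^{-1/p}D^{(Q+sp)/p}$. This constant depends only on $n,p,s$ and the fixed distance $d$.

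\emph{Integrability.} The only point to check is that $\bar u$ is defined and the integrals are finite. This holds because $u\in L^p(\Sp)\subset L^1(\Sp)$, the sphere having finite measure $\om$. Hence $\bar u$ is well defined, and Tonelli's theorem justifies the interchange of integrals. I do not expect any real obstacle: the essential input is just that $d$ is bounded on the compact sphere.

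\emph{Fallback by compactness.} Should one prefer a method not relying on the diameter bound, argue by contradiction. Suppose there are $u_k$ with $\bar u_k=0$, $\|u_k\|_{L^p}=1$ and $[u_k]_{s,p}\to0$. These are bounded in $HW^{s,p}(\Sp)$, so Proposition \ref{prop:compact-embed} with $r=p$ gives a subsequence converging in $L^p$ to some $u$ with $\bar u=0$ and $\|u\|_{L^p}=1$. By Fatou's lemma $[u]_{s,p}=0$, so $u$ is constant by Lemma \ref{lem:seminorm-zero}. A constant with zero average is $0$, which contradicts $\|u\|_{L^p}=1$.
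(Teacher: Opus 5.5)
Your direct argument is correct. It differs from the paper's proof, which is exactly your fallback: the paper normalizes $\|u_k-\overline{u_k}\|_{L^p(\Sp)}=1$ and $[u_k]_{s,p}\le 1/k$, extracts an $L^p$-convergent subsequence via Proposition~\ref{prop:compact-embed}, and concludes with lower semicontinuity and Lemma~\ref{lem:seminorm-zero}.

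Your route is the global version of the Jensen computation in Proposition~\ref{prop:local-poincare-Hn}. The ball is replaced by $\Sp$, and the bound $\rho<2r$ is replaced by the diameter bound $d\le D$. It uses nothing about $d$ beyond boundedness on the compact sphere.

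What your route buys:
\begin{itemize}
\item an explicit constant $C_P=\om^{-1/p}D^{(Q+sp)/p}$;
\item independence from the compact embedding, whose proof in the paper is only sketched via localization and a cited local result. As a result, Proposition~\ref{prop:mean-zero-sobolev} and Theorem~\ref{thm:B-endpoint} rest only on the continuous embedding.
\end{itemize}

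The compactness argument is the standard soft template but yields no constant. The paper reuses it for Theorem~\ref{thm:general-constraint}, yet even there your inequality suffices. Suppose $u_k\in\mathcal X$, $\|u_k\|_{L^p(\Sp)}=1$ and $[u_k]_{s,p}\to0$. Then $\|u_k-\overline{u_k}\|_{L^p(\Sp)}\to0$. The averages $\overline{u_k}$ are bounded, so they subconverge to some $c$ with $|c|\om^{1/p}=1$. Hence $u_k\to c$ in $HW^{s,p}(\Sp)$, and closedness forces $c\in\mathcal X$, a contradiction. Only the finite-dimensionality of the constants is used, not compactness of the embedding.
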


\begin{proof}
Assume by contradiction that \eqref{eq:poincare} fails. Then there exists a sequence $(u_k)\subset HW^{s,p}(\Sp)$ such that
\[
\|u_k-\overline{u_k}\|_{L^p(\Sp)}=1,
\qquad
[u_k]_{s,p}\le \frac1k.
\]
Set
\[
v_k:=u_k-\overline{u_k}.
\]
Then $\bar v_k=0$, $\|v_k\|_{L^p(\Sp)}=1$, and $(v_k)$ is bounded in $HW^{s,p}(\Sp)$. By Proposition \ref{prop:compact-embed}, after passing to a subsequence we may assume that $v_k\to v$ strongly in $L^p(\Sp)$ and almost everywhere on $\Sp$. Lower semicontinuity of the seminorm gives
\[
[v]_{s,p}\le \liminf_{k\to\infty}[v_k]_{s,p}=0,
\]
hence $v$ is constant almost everywhere by Lemma \ref{lem:seminorm-zero}. Since $\bar v_k=0$ for every $k$, passing to the limit yields $\bar v=0$, and therefore $v\equiv0$. This contradicts
\[
\|v\|_{L^p(\Sp)}
=
\lim_{k\to\infty}\|v_k\|_{L^p(\Sp)}=1.
\]
The contradiction proves \eqref{eq:poincare}.
\end{proof}

Combining the global critical embedding with the Poincar\'e inequality immediately yields a pure seminorm bound on the mean-zero component.

\begin{proposition}\label{prop:mean-zero-sobolev}
There exists a constant $C_M=C_M(n,p,s)>0$ such that
\begin{equation}\label{eq:mean-zero-sobolev}
\|u-\bar u\|_{L^{\qexp}(\Sp)}
\le
C_M[u]_{s,p}
\qquad\text{for all }u\in HW^{s,p}(\Sp).
\end{equation}
Consequently,
\begin{equation}\label{eq:mean-zero-sobolev-p}
\|u-\bar u\|_{L^{\qexp}(\Sp)}^p
\le
C_M^p[u]_{s,p}^p.
\end{equation}
\end{proposition}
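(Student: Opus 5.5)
The plan is to apply the global critical embedding of Proposition \ref{prop:cont-embed} to the mean-zero function $w:=u-\bar u$, and then remove the resulting $L^p$ term with the Poincar\'e inequality of Proposition \ref{prop:poincare}.

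First, $w$ is a legitimate test function. Since $u\in HW^{s,p}(\Sp)\subset L^p(\Sp)\subset L^1(\Sp)$ ($\Sp$ has finite measure), the average $\bar u$ is a finite real number. Constants lie in $HW^{s,p}(\Sp)$, and $w(\xi)-w(\eta)=u(\xi)-u(\eta)$ for all $\xi,\eta$, so
\[
[w]_{s,p}=[u]_{s,p},
\]
and hence $w\in HW^{s,p}(\Sp)$.

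Next, Proposition \ref{prop:cont-embed} applied to $w$ gives
\[
\|u-\bar u\|_{L^{\qexp}(\Sp)}
\le C_S\bigl([u]_{s,p}+\|u-\bar u\|_{L^p(\Sp)}\bigr).
\]
By Proposition \ref{prop:poincare}, $\|u-\bar u\|_{L^p(\Sp)}\le C_P[u]_{s,p}$. Combining the two bounds yields \eqref{eq:mean-zero-sobolev} with
\[
C_M:=C_S(1+C_P),
\]
which depends only on $n,p,s$ because $C_S$ and $C_P$ do.

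The power form \eqref{eq:mean-zero-sobolev-p} follows by raising both sides of \eqref{eq:mean-zero-sobolev} to the power $p$. Both sides are nonnegative and $t\mapsto t^p$ is increasing on $[0,\infty)$, so the inequality is preserved.

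There is no genuine obstacle here: all the analytic difficulty lies in the earlier Propositions \ref{prop:cont-embed} and \ref{prop:poincare}, which we take as given. The only points requiring care are that $\bar u$ is finite and that subtracting a constant leaves the seminorm unchanged, both checked above.
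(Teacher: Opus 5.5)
Your proposal is correct and follows the paper's proof exactly. You apply Proposition \ref{prop:cont-embed} to $u-\bar u$, use $[u-\bar u]_{s,p}=[u]_{s,p}$ together with the Poincar\'e inequality, obtain $C_M=C_S(1+C_P)$, and then raise both sides to the power $p$.
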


\begin{proof}
Applying Proposition \ref{prop:cont-embed} to $u-\bar u$ gives
\[
\|u-\bar u\|_{L^{\qexp}(\Sp)}
\le
C_S\bigl([u-\bar u]_{s,p}+\|u-\bar u\|_{L^p(\Sp)}\bigr).
\]
Since
\[
[u-\bar u]_{s,p}=[u]_{s,p},
\]
and Proposition \ref{prop:poincare} yields
\[
\|u-\bar u\|_{L^p(\Sp)}\le C_P [u]_{s,p},
\]
we obtain
\[
\|u-\bar u\|_{L^{\qexp}(\Sp)}
\le
C_S(1+C_P)[u]_{s,p}.
\]
Thus \eqref{eq:mean-zero-sobolev} holds with
\[
C_M=C_S(1+C_P),
\]
and \eqref{eq:mean-zero-sobolev-p} follows by raising both sides to the power $p$.
\end{proof}

\section{The \texorpdfstring{$B$}{B}-program on the CR sphere}\label{sec:B-program}

We now determine the admissible lower-order coefficients in the linear form
\eqref{eq:AB1-intro}. On the compact sphere the mechanism is transparent: constants force a universal lower bound for the coefficient of the $L^p$ term, and the decomposition of $u$ into its average and oscillatory parts shows that this lower bound is also sufficient.

\begin{lemma}\label{lem:lower-bound-B}
If \eqref{eq:AB1-intro} holds for some constants $A>0$ and $B\in\R$, then
\[
B\ge \om^{-s/Q}.
\]
\end{lemma}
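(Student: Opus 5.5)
The plan is to test the inequality \eqref{eq:AB1-intro} on the constant function $u\equiv1$. This function lies in $HW^{s,p}(\Sp)$: it is bounded on the compact sphere, hence in $L^p(\Sp)$, and its seminorm $[1]_{s,p}$ vanishes identically by \eqref{eq:def-seminorm}. The term $A[u]_{s,p}$ therefore drops out completely, whatever the value of $A$, and the inequality reduces to a comparison of two Lebesgue norms of a constant.

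Next I compute these norms directly from the definition $\om=\int_{\Sp}\dd V_{\theta_0}$:
\[
\|1\|_{L^{\qexp}(\Sp)}=\om^{1/\qexp},
\qquad
\|1\|_{L^p(\Sp)}=\om^{1/p}.
\]
Plugging these into \eqref{eq:AB1-intro} gives
\[
\om^{1/\qexp}\le B\,\om^{1/p},
\qquad\text{that is,}\qquad
B\ge \om^{1/\qexp-1/p}.
\]
Since $\om>0$, dividing by $\om^{1/p}$ is harmless, and this step also shows $B>0$ automatically.

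It remains to evaluate the exponent. From $\qexp=Qp/(Q-sp)$ we get
\[
\frac1{\qexp}=\frac{Q-sp}{Qp}=\frac1p-\frac sQ,
\qquad\text{so}\qquad
\frac1{\qexp}-\frac1p=-\frac sQ.
\]
This yields $B\ge\om^{-s/Q}$, which is the claim.

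There is no real obstacle here. The only point worth checking is that constants are legitimate test functions in $HW^{s,p}(\Sp)$ with zero seminorm, and that is immediate. The hypothesis $A>0$ plays no role in the argument.
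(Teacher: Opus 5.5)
Your proposal is correct and matches the paper's proof: both test \eqref{eq:AB1-intro} on a constant function, so the seminorm term vanishes, and the inequality reduces to $B\ge\om^{1/\qexp-1/p}=\om^{-s/Q}$. The paper uses an arbitrary constant $c\neq0$ and cancels $|c|$, while you take $c=1$ and also spell out the exponent computation; this is the same argument.
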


\begin{proof}
Taking $u\equiv c$ with $c\neq0$, we have $[u]_{s,p}=0$, so \eqref{eq:AB1-intro} gives
\[
|c|\,\om^{1/\qexp}\le B|c|\,\om^{1/p}.
\]
After cancelling $|c|$, we obtain
\[
B\ge \om^{1/\qexp-1/p}=\om^{-s/Q}.
\]
\end{proof}

The next result shows that the lower bound given by constants is in fact attained.

\begin{theorem}\label{thm:B-endpoint}
For every $1<p<Q$ and every $s\in(0,1)$ there exists $A_0=A_0(n,p,s)>0$ such that
\begin{equation}\label{eq:B-endpoint}
\|u\|_{L^{\qexp}(\Sp)}
\le A_0[u]_{s,p}+\om^{-s/Q}\|u\|_{L^p(\Sp)}
\qquad\text{for all }u\in HW^{s,p}(\Sp).
\end{equation}
\end{theorem}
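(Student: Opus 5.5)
We will split every $u\in HW^{s,p}(\Sp)$ into its average $\bar u$ and its oscillation $u-\bar u$, and treat the two parts separately. The average part is handled exactly by the constant computation of Lemma \ref{lem:lower-bound-B}, and the oscillation part is handled by the mean-zero Sobolev inequality of Proposition \ref{prop:mean-zero-sobolev}. In the linear form \eqref{eq:AB1-intro} no interaction term appears, so the endpoint coefficient $\om^{-s/Q}$ will be attained.

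\textbf{Step 1: triangle inequality.} By Minkowski's inequality in $L^{\qexp}(\Sp)$,
\[
\|u\|_{L^{\qexp}(\Sp)}\le \|\bar u\|_{L^{\qexp}(\Sp)}+\|u-\bar u\|_{L^{\qexp}(\Sp)}.
\]
We apply Proposition \ref{prop:mean-zero-sobolev} to the second term, which bounds it by $C_M[u]_{s,p}$.

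\textbf{Step 2: the constant part.} We have $\|\bar u\|_{L^{\qexp}(\Sp)}=|\bar u|\,\om^{1/\qexp}$. Jensen's (or H\"older's) inequality gives
\[
|\bar u|\le \om^{-1}\int_{\Sp}|u|\,\dd V_{\theta_0}\le \om^{-1/p}\|u\|_{L^p(\Sp)}.
\]
Therefore
\[
\|\bar u\|_{L^{\qexp}(\Sp)}\le \om^{1/\qexp-1/p}\|u\|_{L^p(\Sp)}=\om^{-s/Q}\|u\|_{L^p(\Sp)},
\]
using $1/\qexp-1/p=-s/Q$.

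\textbf{Step 3: conclusion.} Combining the two steps yields \eqref{eq:B-endpoint} with $A_0=C_M$, which depends only on $(n,p,s)$.

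The proof carries no real obstacle: all the analytic work is contained in the Poincar\'e inequality (Proposition \ref{prop:poincare}) and the critical embedding (Proposition \ref{prop:cont-embed}), which are combined in Proposition \ref{prop:mean-zero-sobolev}. The one point to check is that the H\"older step in Step 2 produces exactly the constant $\om^{-s/Q}$, with no loss, so that the endpoint is attained. It does, because H\"older's inequality is sharp on constants.
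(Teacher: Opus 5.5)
Your proposal is correct and is essentially the paper's proof. Both split off the mean $\bar u$, bound $\|u-\bar u\|_{L^{\qexp}(\Sp)}$ by $C_M[u]_{s,p}$ via Proposition~\ref{prop:mean-zero-sobolev}, and use H\"older's inequality to get $\|\bar u\|_{L^{\qexp}(\Sp)}\le \om^{-s/Q}\|u\|_{L^p(\Sp)}$, which yields $A_0=C_M$.
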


\begin{proof}
Let
\[
\bar u=\frac1\om\int_{\Sp}u\,\dd V_{\theta_0}.
\]
By the triangle inequality in $L^{\qexp}(\Sp)$ and Proposition \ref{prop:mean-zero-sobolev},
\[
\|u\|_{L^{\qexp}(\Sp)}
\le \|u-\bar u\|_{L^{\qexp}(\Sp)}+\|\bar u\|_{L^{\qexp}(\Sp)}
\le C_M [u]_{s,p}+\|\bar u\|_{L^{\qexp}(\Sp)}.
\]
It remains to estimate the constant part. Since $\bar u$ is constant,
\[
\|\bar u\|_{L^{\qexp}(\Sp)}
=
\om^{1/\qexp}|\bar u|
=
\om^{1/\qexp-1}\left|\int_{\Sp}u\,\dd V_{\theta_0}\right|.
\]
By H\"older's inequality,
\[
\left|\int_{\Sp}u\,\dd V_{\theta_0}\right|
\le
\om^{1-1/p}\|u\|_{L^p(\Sp)},
\]
and therefore
\[
\|\bar u\|_{L^{\qexp}(\Sp)}
\le
\om^{1/\qexp-1/p}\|u\|_{L^p(\Sp)}
=
\om^{-s/Q}\|u\|_{L^p(\Sp)}.
\]
This proves \eqref{eq:B-endpoint} with $A_0=C_M$.
\end{proof}

\begin{proof}[Proof of Theorem \ref{thm:main-B-intro}]
Theorem \ref{thm:B-endpoint} shows that the endpoint value $\om^{-s/Q}$ belongs to $\Bsph$. If \eqref{eq:AB1-intro} holds for some $B_0$, then it also holds for every $B\ge B_0$, so every $B>\om^{-s/Q}$ is admissible as well. Conversely, Lemma \ref{lem:lower-bound-B} excludes every $B<\om^{-s/Q}$. Hence
\[
\Bsph=[\om^{-s/Q},\infty).
\]
\end{proof}

\begin{remark}\label{rem:B-sharpness}
The sharp lower-order coefficient is determined entirely by constant functions. This is a genuinely compact phenomenon: on the full space $HW^{s,p}(\Sp)$ the seminorm does not see constants, and the optimal remainder is exactly the one needed to control them. This should be contrasted with homogeneous inequalities on the Heisenberg group, where constants do not belong to the natural homogeneous class.
\end{remark}

\section{The \texorpdfstring{$\widehat B$}{B-hat}-program on the CR sphere}\label{sec:Bhat-program}

We next consider the power form \eqref{eq:AB2-intro}. Constants again provide an unavoidable lower bound, but the attainability of the threshold depends on the range of $p$. Roughly speaking, when $1<p\le2$ the relevant concavity allows one to retain the endpoint, whereas for $p>2$ a perturbation around constants shows that the endpoint fails.

Throughout this section we write
\[
q=\qexp=\frac{Qp}{Q-sp}.
\]

\begin{lemma}\label{lem:lower-bound-Bhat}
If \eqref{eq:AB2-intro} holds for some constants $A>0$ and $B\in\R$, then
\[
B\ge \om^{-sp/Q}.
\]
\end{lemma}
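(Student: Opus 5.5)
The plan is to test \eqref{eq:AB2-intro} on constant functions, exactly as in the proof of Lemma \ref{lem:lower-bound-B}. For a constant function the seminorm vanishes, so the term with coefficient $A$ drops out. What remains is a comparison of two Lebesgue norms of a constant, and this comparison forces the lower bound on $B$.

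Concretely, take $u\equiv c$ with $c\neq0$. Then $u\in HW^{s,p}(\Sp)$ and $[u]_{s,p}=0$, since the integrand in \eqref{eq:def-seminorm} vanishes identically. Moreover
\[
\|u\|_{L^{q}(\Sp)}=|c|\,\om^{1/q},
\qquad
\|u\|_{L^p(\Sp)}=|c|\,\om^{1/p}.
\]
Inserting these into \eqref{eq:AB2-intro} gives
\[
|c|^p\om^{p/q}\le B\,|c|^p\om .
\]
Cancelling $|c|^p>0$ yields $B\ge \om^{p/q-1}$.

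It remains to identify the exponent. Since $1/q=1/p-s/Q$, we have $p/q=1-sp/Q$, and therefore
\[
\om^{p/q-1}=\om^{-sp/Q}.
\]
This is exactly the claimed bound.

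There is no real obstacle in this argument. The only point to check is that constants belong to $HW^{s,p}(\Sp)$, which is immediate because $\Sp$ has finite measure. Note also that the assumption $A>0$ plays no role, since the $A$-term vanishes on constants.
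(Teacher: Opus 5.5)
Your proposal is correct and follows essentially the same argument as the paper: testing \eqref{eq:AB2-intro} on a nonzero constant removes the seminorm term and gives $|c|^p\om^{p/q}\le B|c|^p\om$. Hence $B\ge\om^{p/q-1}=\om^{-sp/Q}$.
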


\begin{proof}
Testing \eqref{eq:AB2-intro} on the constant function $u\equiv c\neq0$ gives
\[
|c|^p\om^{p/q}\le B|c|^p\om.
\]
Hence
\[
B\ge \om^{p/q-1}=\om^{-sp/Q}.
\]
\end{proof}

To treat the endpoint when $1<p\le2$, we first record two elementary decompositions according to whether $q$ lies above or below $2$.

\begin{lemma}\label{lem:q-ge-2}
Assume that $q\ge2$. Then every $u\in L^q(\Sp)$ satisfies
\begin{equation}\label{eq:q-ge-2}
\|u\|_{L^q(\Sp)}^2
\le \om^{2/q}|\bar u|^2+(q-1)\|u-\bar u\|_{L^q(\Sp)}^2.
\end{equation}
\end{lemma}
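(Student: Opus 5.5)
The plan is to reduce the lemma to a two-point inequality, which is the standard route to such sharp estimates (Bonami–Beckner/Ball–Carlen–Lieb type). Write $u=a+v$ with $a=\bar u$ a constant and $\int_{\Sp}v\,\dd V_{\theta_0}=0$. Normalize the measure to a probability: let $\mu=\dd V_{\theta_0}/\om$ and $\|w\|_q^{\mu}=(\int|w|^q\dd\mu)^{1/q}$. Then $\|u\|_{L^q(\Sp)}^2=\om^{2/q}(\|u\|_q^{\mu})^2$, and \eqref{eq:q-ge-2} becomes
\[
\bigl(\|a+v\|_q^{\mu}\bigr)^2\le |a|^2+(q-1)\bigl(\|v\|_q^{\mu}\bigr)^2,\qquad \int v\,\dd\mu=0.
\]
This is exactly the known $2$-uniform smoothness inequality for $L^q$ with $q\ge2$ and optimal constant $q-1$, namely
\[
\frac{\|x+y\|_q^2+\|x-y\|_q^2}{2}\le \|x\|_q^2+(q-1)\|y\|_q^2,
\]
due to Ball, Carlen and Lieb, specialized to $x=a$ constant.

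If we want a self-contained argument rather than quoting that result, we proceed in steps. By homogeneity and the case $a=0$ being trivial, we may take $a=1$ after replacing $v$ by $v/a$; for complex $a$ one rotates, but here everything is real. We must show $F(v):=(\int|1+v|^q\dd\mu)^{2/q}\le 1+(q-1)(\int|v|^q\dd\mu)^{2/q}$ for mean-zero $v$.

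The key step is the two-point (Bonami) inequality, valid for real $x,y$ and $q\ge2$:
\[
\Bigl(\tfrac{|x+y|^q+|x-y|^q}{2}\Bigr)^{2/q}\le x^2+(q-1)y^2 .
\]
Applying this pointwise with $x=1$ and $y=v$ does not directly use the mean-zero condition, so we instead use symmetrization. Mean zero gives $\int|1+v|^q\dd\mu\le\int|1+v-v'|^q\dd\mu\,\dd\mu'$ by Jensen in $v'$ (the function $t\mapsto|1+v-t|^q$ is convex and $\int v'\dd\mu'=0$). The variable $w=v-v'$ is symmetric, so the integral equals $\int\frac{|1+w|^q+|1-w|^q}{2}$. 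The two-point inequality in the form $\frac{|1+w|^q+|1-w|^q}{2}\le(1+(q-1)w^2)^{q/2}$, followed by Minkowski's inequality in $L^{q/2}$ (valid since $q/2\ge1$), gives
\[
F\le 1+(q-1)\|w\|_q^2 .
\]

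This last bound loses a constant: $\|v-v'\|_q$ can be as large as $2\|v\|_q$. The main obstacle is therefore to avoid the symmetrization loss, and the fix is to use the two-point inequality in its hypercontractive/uniform-smoothness form directly. First try to run Ball–Carlen–Lieb's argument: prove $\phi(t)=\|1+tv\|_q^2$ satisfies $\phi''(t)\le 2(q-1)\|v\|_q^2$ using Hölder. One has $\phi'(0)=2\int v\,\dd\mu=0$ by the mean-zero condition. A direct computation gives
\[
\phi''\le 2(q-1)\Bigl(\int|1+tv|^{q-2}v^2\Bigr)\|1+tv\|_q^{2-q}\le 2(q-1)\|v\|_q^2,
\]
the second step by Hölder with exponents $q/(q-2)$ and $q/2$. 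Taylor's theorem at $t=1$ then yields the claim, with the neglected term $-(\cdots)^2$ in $\phi''$ being nonpositive. This derivative computation is the real proof. Justifying differentiation for $v\in L^q$ is routine, either by dominated convergence or by approximating $v$ with bounded functions.

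Finally, multiply through by $\om^{2/q}$ and note $\om^{2/q}\|v\|_q^{\mu\,2}=\|u-\bar u\|_{L^q(\Sp)}^2$ to conclude \eqref{eq:q-ge-2}.
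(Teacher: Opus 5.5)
Your final derivative argument is correct and is essentially the paper's proof. With $\phi(t)=\|1+tv\|_q^2$ you drop the nonpositive squared term in $\phi''$, bound the rest by $2(q-1)\|v\|_q^2$ via H\"older with exponents $q/(q-2)$ and $q/2$, and use $\phi'(0)=0$ from the mean-zero condition, exactly as the paper does; the paper normalizes $\|v\|_{L^q}=1$ in the unnormalized measure and integrates over $[0,\tau]$ instead of rescaling to $a=1$ and expanding at $t=1$. The Ball--Carlen--Lieb citation and the symmetrization detour are unnecessary. Also, specializing their averaged two-sided inequality to a constant $x$ does not by itself give the one-sided bound; it is the input $\phi'(0)=0$ that makes the argument work.
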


\begin{proof}
Let
\[
w:=u-\bar u.
\]
If $\bar u=0$, then \eqref{eq:q-ge-2} is immediate because $q-1\ge1$. If $w\equiv0$, there is nothing to prove. We may therefore assume that $\bar u\neq0$ and $w\not\equiv0$. Replacing $u$ by $-u$ if necessary, we may also suppose $\bar u>0$.

Set
\[
v:=\frac{w}{\|w\|_{L^q(\Sp)}},
\qquad
\tau:=\frac{\|w\|_{L^q(\Sp)}}{\bar u}.
\]
Then
\[
u=\bar u(1+\tau v),
\qquad
\int_{\Sp}v\,\dd V_{\theta_0}=0,
\qquad
\|v\|_{L^q(\Sp)}=1.
\]
Consider the function
\[
\Phi(t):=\left(\int_{\Sp}|1+tv|^q\,\dd V_{\theta_0}\right)^{2/q}.
\]
Since $q\ge2$ and $\Sp$ has finite measure, the functions $|1+tv|^{q-1}|v|$ and $|1+tv|^{q-2}v^2$ are integrable for $t$ in bounded intervals; hence the derivatives below are justified by differentiation under the integral sign.
A direct computation gives
\[
\Phi'(t)
=
2\Bigl(\int_{\Sp}|1+tv|^q\Bigr)^{2/q-1}
\int_{\Sp}|1+tv|^{q-2}(1+tv)v,
\]
and
\begin{align*}
\Phi''(t)
&=
2q\left(\frac2q-1\right)
\Bigl(\int_{\Sp}|1+tv|^q\Bigr)^{2/q-2}
\left(\int_{\Sp}|1+tv|^{q-2}(1+tv)v\right)^2 \\
&\quad
+2(q-1)
\Bigl(\int_{\Sp}|1+tv|^q\Bigr)^{2/q-1}
\int_{\Sp}|1+tv|^{q-2}v^2.
\end{align*}
Since $q\ge2$, one has $\frac2q-1\le0$, so the first term is non-positive. By H\"older's inequality and the normalization $\|v\|_{L^q(\Sp)}=1$,
\[
\int_{\Sp}|1+tv|^{q-2}v^2
\le
\left(\int_{\Sp}|1+tv|^q\right)^{1-2/q}
\left(\int_{\Sp}|v|^q\right)^{2/q}
=
\left(\int_{\Sp}|1+tv|^q\right)^{1-2/q}.
\]
Therefore
\[
\Phi''(t)\le 2(q-1).
\]
Moreover,
\[
\Phi(0)=\om^{2/q},
\qquad
\Phi'(0)=2\om^{2/q-1}\int_{\Sp}v\,\dd V_{\theta_0}=0.
\]
Integrating twice on $[0,\tau]$, we obtain
\[
\Phi(\tau)\le \om^{2/q}+(q-1)\tau^2.
\]
Multiplying by $\bar u^2$ and using $u=\bar u(1+\tau v)$ and $w=\bar u\tau v$, we conclude that
\[
\|u\|_{L^q(\Sp)}^2
\le \om^{2/q}|\bar u|^2+(q-1)\|w\|_{L^q(\Sp)}^2,
\]
which is exactly \eqref{eq:q-ge-2}.
\end{proof}

\begin{lemma}\label{lem:q-lt-2}
Assume that $1<q<2$. Then there exists a constant $C_q>0$ such that every $u\in L^q(\Sp)$ satisfies
\begin{equation}\label{eq:q-lt-2}
\|u\|_{L^q(\Sp)}^q
\le \om |\bar u|^q + C_q \|u-\bar u\|_{L^q(\Sp)}^q.
\end{equation}
\end{lemma}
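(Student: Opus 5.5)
We prove \eqref{eq:q-lt-2} with a pointwise inequality for real numbers, integrated over $\Sp$ and combined with the mean-zero property of $w:=u-\bar u$. Write $a:=\bar u$, so that $u=a+w$ and $\int_{\Sp}w\,\dd V_{\theta_0}=0$. The key ingredient is the elementary inequality, valid for $1<q\le 2$ and all real $a,b$:
\begin{equation*}
|a+b|^q\le |a|^q+q|a|^{q-2}a\,b+C_q|b|^q,
\end{equation*}
with the convention that the middle term is $0$ when $a=0$. Integrating this with $b=w(\xi)$, the linear term disappears because $a$ is constant and $w$ has zero average. This leaves
\[
\|u\|_{L^q(\Sp)}^q\le \om|a|^q+C_q\|w\|_{L^q(\Sp)}^q,
\]
which is exactly \eqref{eq:q-lt-2}.

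It remains to prove the pointwise inequality, and this is the only real step. If $a=0$ it holds with any $C_q\ge1$. For $a\neq0$, homogeneity of degree $q$ lets us divide by $|a|^q$ and, by symmetry under $(a,b)\mapsto(-a,-b)$, assume $a=1$. The claim then becomes
\[
g(t):=|1+t|^q-1-qt\le C_q|t|^q\qquad\text{for all }t\in\R.
\]
We bound $g$ separately for small and large $|t|$.

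\emph{Small $|t|$.} For $|t|\le\tfrac12$, Taylor's formula gives $g(t)=\tfrac{q(q-1)}2|1+\theta t|^{q-2}t^2$ for some $\theta\in(0,1)$. On this range $|1+\theta t|\ge \tfrac12$, so $g(t)\le \tfrac{q(q-1)}{2}2^{2-q}t^2$. Since $q<2$ and $|t|\le\tfrac12$, we have $t^2\le|t|^q$, which gives the bound.

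\emph{Large $|t|$.} For $|t|\ge\tfrac12$ we use the crude estimate
\[
g(t)\le (1+|t|)^q+1+q|t|\le (3^q+2^q+q2^{q-1})|t|^q,
\]
where each term is bounded using $1\le 2|t|$.

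Taking $C_q$ to be the larger of the two constants gives the inequality for all $t$. The main point to watch is the singularity of $|1+t|^{q-2}$ at $t=-1$ when $q<2$. The split at $|t|=\tfrac12$ keeps the Taylor argument away from that point, and for $|t|\ge\tfrac12$ we need no derivatives at all.
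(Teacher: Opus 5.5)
Your proof is correct and follows the paper's argument. Both use the pointwise inequality $|a+b|^q\le|a|^q+q|a|^{q-2}ab+C_q|b|^q$, reduce it by homogeneity to the boundedness of $(|1+t|^q-1-qt)/|t|^q$, and integrate with $a=\bar u$, $b=u-\bar u$ so that the linear term vanishes. Your split at $|t|=\tfrac12$ just makes explicit the boundedness that the paper only asserts from the behaviour at $t\to0$ and $|t|\to\infty$.
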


\begin{proof}
We first claim that there exists a constant $C_q>0$ such that
\begin{equation}\label{eq:scalar-q-lt-2}
|a+b|^q
\le
|a|^q + q|a|^{q-1}\operatorname{sgn}(a)b + C_q|b|^q
\qquad\text{for all }a,b\in\R.
\end{equation}
By homogeneity, it is enough to show that
\[
F(t):=\frac{|1+t|^q-1-qt}{|t|^q}
\]
is bounded for $t\neq0$. Since $1<q<2$, one has $F(t)\to0$ as $t\to0$, and $F$ is also bounded as $|t|\to\infty$. This proves \eqref{eq:scalar-q-lt-2}.

Applying \eqref{eq:scalar-q-lt-2} pointwise with $a=\bar u$ and $b=u-\bar u$, then integrating over $\Sp$, we get
\[
\int_{\Sp}|u|^q\,\dd V_{\theta_0}
\le
\om |\bar u|^q
+
q|\bar u|^{q-1}\operatorname{sgn}(\bar u)\int_{\Sp}(u-\bar u)\,\dd V_{\theta_0}
+
C_q\int_{\Sp}|u-\bar u|^q\,\dd V_{\theta_0}.
\]
Since
\[
\int_{\Sp}(u-\bar u)\,\dd V_{\theta_0}=0,
\]
the linear term vanishes. Hence
\[
\|u\|_{L^q(\Sp)}^q
\le \om |\bar u|^q + C_q \|u-\bar u\|_{L^q(\Sp)}^q,
\]
which is \eqref{eq:q-lt-2}.
\end{proof}

\begin{theorem}\label{thm:endpoint-le2}
Let $1<p\le2$ and let $q=\qexp$. Then there exists $A_1=A_1(n,p,s)>0$ such that
\begin{equation}\label{eq:endpoint-Bhat-le2}
\|u\|_{L^q(\Sp)}^p
\le A_1[u]_{s,p}^p+\om^{-sp/Q}\|u\|_{L^p(\Sp)}^p
\qquad\text{for all }u\in HW^{s,p}(\Sp).
\end{equation}
\end{theorem}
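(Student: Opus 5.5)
The plan is to split $u=\bar u+w$ with $w:=u-\bar u$, and to aim for the intermediate estimate
\begin{equation*}
\|u\|_{L^q(\Sp)}^p\le \om^{p/q}|\bar u|^p+C\,\|w\|_{L^q(\Sp)}^p ,
\end{equation*}
with $C$ depending only on $n,p,s$. Granting this, the theorem follows from two earlier ingredients. By Proposition \ref{prop:mean-zero-sobolev} (in the form \eqref{eq:mean-zero-sobolev-p}), $\|w\|_{L^q(\Sp)}^p\le C_M^p[u]_{s,p}^p$. For the constant part, H\"older's inequality (exactly as in the proof of Theorem \ref{thm:B-endpoint}) gives $\om|\bar u|^p\le\|u\|_{L^p(\Sp)}^p$, so
\[
\om^{p/q}|\bar u|^p=\om^{p/q-1}\,\om|\bar u|^p\le \om^{-sp/Q}\|u\|_{L^p(\Sp)}^p .
\]
Combining these yields \eqref{eq:endpoint-Bhat-le2} with $A_1=C\,C_M^p$.

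To prove the intermediate estimate, I would set $a:=\om^{1/q}|\bar u|$ and distinguish two cases according to the size of $q$; note that $q>p>1$ always. This is where the hypothesis $p\le2$ enters, through subadditivity of $t\mapsto t^\alpha$ for $\alpha\in(0,1]$.

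\emph{Case $q\ge2$.} Lemma \ref{lem:q-ge-2} gives $\|u\|_{L^q}^2\le a^2+b^2$ with $b:=\sqrt{q-1}\,\|w\|_{L^q}$. Raising this to the power $p/2\in(0,1]$ and using $(x+y)^{p/2}\le x^{p/2}+y^{p/2}$ for $x,y\ge0$, I obtain
\[
\|u\|_{L^q}^p\le a^p+(q-1)^{p/2}\|w\|_{L^q}^p .
\]

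\emph{Case $1<q<2$.} Then $p<q<2$. Lemma \ref{lem:q-lt-2} gives $\|u\|_{L^q}^q\le a^q+C_q\|w\|_{L^q}^q$, since $\om|\bar u|^q=a^q$. Raising to the power $p/q\in(0,1)$ and using the same subadditivity, I obtain
\[
\|u\|_{L^q}^p\le a^p+C_q^{p/q}\|w\|_{L^q}^p .
\]

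In both cases the intermediate estimate holds, and $C$ depends only on $q$, hence only on $(n,p,s)$.

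The only delicate point is this passage from the quadratic (or $q$-th power) decomposition to the $p$-th power. It requires the exponent $p/2$ (respectively $p/q$) to be at most $1$, which is exactly why the argument is restricted to $p\le2$. For $p>2$ the map $t\mapsto t^{p/2}$ is no longer subadditive, and a cross term of order $a^{p-2}\|w\|^2$ survives; this is consistent with the failure of the endpoint asserted in Theorem \ref{thm:main-Bhat-intro}(ii). Everything else consists of direct applications of the cited results.
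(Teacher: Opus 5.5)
Your proposal is correct and follows the paper's proof essentially verbatim. You use the same split $u=\bar u+w$, the same two cases via Lemmas \ref{lem:q-ge-2} and \ref{lem:q-lt-2}, subadditivity of $t\mapsto t^{p/2}$ (respectively $t\mapsto t^{p/q}$), H\"older for the mean, and Proposition \ref{prop:mean-zero-sobolev} for $w$, arriving at the same constants $A_1$. The only difference is cosmetic: you raise to the power $p$ before inserting the H\"older and mean-zero Sobolev bounds, whereas the paper inserts them first.
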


\begin{proof}
Let
\[
w:=u-\bar u.
\]

Assume first that $q\ge2$. By Lemma \ref{lem:q-ge-2},
\[
\|u\|_{L^q(\Sp)}^2
\le
\om^{2/q}|\bar u|^2+(q-1)\|w\|_{L^q(\Sp)}^2.
\]
Using H\"older's inequality,
\[
\om^{2/q}|\bar u|^2
\le
\om^{2/q-2/p}\|u\|_{L^p(\Sp)}^2
=
\om^{-2s/Q}\|u\|_{L^p(\Sp)}^2.
\]
Moreover, Proposition \ref{prop:mean-zero-sobolev} gives
\[
\|w\|_{L^q(\Sp)}^2\le C_M^2 [u]_{s,p}^2.
\]
Therefore
\[
\|u\|_{L^q(\Sp)}^2
\le
\om^{-2s/Q}\|u\|_{L^p(\Sp)}^2+(q-1)C_M^2 [u]_{s,p}^2.
\]
Since $p/2\le1$, the map $t\mapsto t^{p/2}$ is subadditive on $[0,\infty)$, and hence
\[
\|u\|_{L^q(\Sp)}^p
\le
\om^{-sp/Q}\|u\|_{L^p(\Sp)}^p
+\bigl((q-1)C_M^2\bigr)^{p/2}[u]_{s,p}^p.
\]

Assume next that $1<q<2$. By Lemma \ref{lem:q-lt-2} and Proposition \ref{prop:mean-zero-sobolev},
\[
\|u\|_{L^q(\Sp)}^q
\le
\om |\bar u|^q + C_q C_M^q [u]_{s,p}^q.
\]
Again by H\"older's inequality,
\[
\om |\bar u|^q \le \om^{1-q/p}\|u\|_{L^p(\Sp)}^q.
\]
Since $p/q<1$, raising the previous inequality to the power $p/q$ and using subadditivity yields
\begin{align*}
\|u\|_{L^q(\Sp)}^p
&\le
\bigl(\om^{1-q/p}\|u\|_{L^p(\Sp)}^q\bigr)^{p/q}
+
\bigl(C_q C_M^q [u]_{s,p}^q\bigr)^{p/q} \\
&=
\om^{p/q-1}\|u\|_{L^p(\Sp)}^p
+
(C_q C_M^q)^{p/q}[u]_{s,p}^p \\
&=
\om^{-sp/Q}\|u\|_{L^p(\Sp)}^p
+
(C_q C_M^q)^{p/q}[u]_{s,p}^p.
\end{align*}

Combining the two cases proves \eqref{eq:endpoint-Bhat-le2}.
\end{proof}

\begin{proof}[Proof of Theorem \ref{thm:main-Bhat-intro}, part \textup{(i)}]
Theorem \ref{thm:endpoint-le2} shows that the endpoint value $\om^{-sp/Q}$ is admissible when $1<p\le2$. By monotonicity, every larger $B$ is also admissible. Lemma \ref{lem:lower-bound-Bhat} excludes every smaller value. Therefore
\[
\Bhsph=[\om^{-sp/Q},\infty)
\qquad\text{for }1<p\le2.
\]
\end{proof}

We now turn to the range $2<p<Q$. In this regime every coefficient strictly larger than the threshold remains admissible, but the threshold itself is not.

\begin{proposition}\label{prop:any-B-larger}
Let $2<p<Q$. Then for every $B>\om^{-sp/Q}$ there exists $A_B>0$ such that
\[
\|u\|_{L^q(\Sp)}^p
\le A_B[u]_{s,p}^p+B\|u\|_{L^p(\Sp)}^p
\qquad\text{for all }u\in HW^{s,p}(\Sp).
\]
\end{proposition}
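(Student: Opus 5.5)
We will follow the mean/oscillation splitting of Theorem \ref{thm:B-endpoint}, but allow a small loss in the lower-order coefficient so that the convexity of $t\mapsto t^p$ for $p>2$ can be absorbed. Fix $B>\om^{-sp/Q}$ and write $B=(1+\delta)\om^{-sp/Q}$ with $\delta>0$. As before set $w:=u-\bar u$.

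The first step is the triangle inequality in $L^q(\Sp)$ combined with Proposition \ref{prop:mean-zero-sobolev} and the H\"older estimate from the proof of Theorem \ref{thm:B-endpoint}. This gives
\[
\|u\|_{L^q(\Sp)}\le \|\bar u\|_{L^q(\Sp)}+\|w\|_{L^q(\Sp)}\le \om^{-s/Q}\|u\|_{L^p(\Sp)}+C_M[u]_{s,p}.
\]

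The second step raises this to the power $p$ using the elementary inequality $(a+b)^p\le (1+\delta)a^p+C_\delta b^p$ for $a,b\ge0$. This inequality holds with $C_\delta=\bigl(1-(1+\delta)^{-1/(p-1)}\bigr)^{1-p}$, which follows from convexity of $t\mapsto t^p$: write $a+b=\lambda(a/\lambda)+(1-\lambda)(b/(1-\lambda))$ with $\lambda^{1-p}=1+\delta$. We then obtain
\[
\|u\|_{L^q(\Sp)}^p\le (1+\delta)\om^{-sp/Q}\|u\|_{L^p(\Sp)}^p+C_\delta C_M^p[u]_{s,p}^p,
\]
which is the claim with $A_B=C_\delta C_M^p$.

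No step is a serious obstacle. The only point requiring care is the choice of the convexity weight $\lambda$ in the scalar inequality, and noting that $C_\delta\to\infty$ as $\delta\to0$. This blow-up is consistent with the expected failure of the endpoint for $p>2$. The argument in fact works for every $1<p<Q$, but it is only needed for $p>2$, since for $p\le2$ Theorem \ref{thm:endpoint-le2} already gives the endpoint.
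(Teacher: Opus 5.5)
Your proposal is correct and is essentially the paper's argument: the paper also starts from the linear endpoint inequality of Theorem \ref{thm:B-endpoint} (your first step re-derives it with $A_0=C_M$) and raises it to the power $p$ via $(x+y)^p\le(1+\tau)^{p-1}x^p+(1+1/\tau)^{p-1}y^p$, which is your convexity bound with $\lambda=\tau/(1+\tau)$. The only difference is that you hit $B$ exactly by writing $B=(1+\delta)\om^{-sp/Q}$, whereas the paper takes $\tau$ large enough that the lower-order coefficient drops below $B$.
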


\begin{proof}
Starting from the linear endpoint inequality \eqref{eq:B-endpoint},
\[
\|u\|_{L^q(\Sp)}\le A_0[u]_{s,p}+\om^{-s/Q}\|u\|_{L^p(\Sp)},
\]
we raise both sides to the power $p$ and use the weighted Young inequality
\[
(x+y)^p\le (1+\tau)^{p-1}x^p+\left(1+\frac1\tau\right)^{p-1}y^p,
\qquad x,y\ge0,\ \tau>0.
\]
This gives
\[
\|u\|_{L^q(\Sp)}^p
\le
(1+\tau)^{p-1}A_0^p [u]_{s,p}^p
+
\left(1+\frac1\tau\right)^{p-1}\om^{-sp/Q}\|u\|_{L^p(\Sp)}^p.
\]
Since
\[
\left(1+\frac1\tau\right)^{p-1}\om^{-sp/Q}\downarrow \om^{-sp/Q}
\qquad\text{as }\tau\to\infty,
\]
we may choose $\tau$ so large that
\[
\left(1+\frac1\tau\right)^{p-1}\om^{-sp/Q}<B.
\]
The conclusion follows with
\[
A_B=(1+\tau)^{p-1}A_0^p.
\]
\end{proof}

The next theorem excludes the endpoint. The proof uses a second-order expansion around the constant function \(1\). The decisive point is that when \(p>2\), the seminorm of \(1+\varepsilon\varphi\) is \(o(\varepsilon^2)\), whereas the defect in the \(L^q\) and \(L^p\) terms appears already at order \(\varepsilon^2\).

\begin{theorem}\label{thm:endpoint-fails-gt2}
Let $2<p<Q$ and let $q=\qexp$. There is no constant $A>0$ such that
\begin{equation}\label{eq:forbidden-endpoint}
\|u\|_{L^q(\Sp)}^p
\le A[u]_{s,p}^p+\om^{-sp/Q}\|u\|_{L^p(\Sp)}^p
\qquad\text{for all }u\in HW^{s,p}(\Sp).
\end{equation}
\end{theorem}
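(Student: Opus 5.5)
The plan is to test \eqref{eq:forbidden-endpoint} on small perturbations of the constant function $1$:
\[
u_\varepsilon:=1+\varepsilon\varphi,
\]
where $\varphi\in C^1(\Sp)$ is real-valued, non-constant and has $\int_{\Sp}\varphi\,\dd V_{\theta_0}=0$. A natural choice is $\varphi=\Re\,\zeta_1$, which has mean zero by symmetry.

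First I check that $\varphi$ is an admissible test direction. Since $\varphi$ is Lipschitz with respect to $d$, the argument in the proof of Proposition~\ref{prop:product-estimate} bounds $[\varphi]_{s,p}^p$ by $\Lip(\varphi)^p$ times the integral of $d(\xi,\eta)^{-(Q-(1-s)p)}$, which is finite. Constants do not contribute to the seminorm, so
\[
[u_\varepsilon]_{s,p}^p=\varepsilon^p[\varphi]_{s,p}^p .
\]
Since $p>2$, this term is $o(\varepsilon^2)$. This is where the hypothesis $p>2$ enters.

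Next I expand both Lebesgue terms to second order in $\varepsilon$. Take $|\varepsilon|\,\|\varphi\|_\infty\le 1/2$. Then $1+\varepsilon\varphi\in[1/2,3/2]$ pointwise, and $t\mapsto t^r$ is smooth there. Taylor's formula with a uniform remainder gives, for $r\in\{p,q\}$,
\[
\int_{\Sp}|1+\varepsilon\varphi|^r\,\dd V_{\theta_0}
=\om+\frac{r(r-1)}{2}\varepsilon^2\int_{\Sp}\varphi^2\,\dd V_{\theta_0}+O(\varepsilon^3).
\]
The linear term drops because $\varphi$ has mean zero. Raising to the power $p/q$ and setting $I:=\int_{\Sp}\varphi^2\,\dd V_{\theta_0}$ gives
\[
\|u_\varepsilon\|_{L^q}^p=\om^{p/q}+\om^{p/q-1}\,\frac{p(q-1)}{2}\varepsilon^2 I+O(\varepsilon^3).
\]
On the other side, $\om^{-sp/Q}=\om^{p/q-1}$, so
\[
\om^{-sp/Q}\|u_\varepsilon\|_{L^p}^p=\om^{p/q}+\om^{p/q-1}\,\frac{p(p-1)}{2}\varepsilon^2 I+O(\varepsilon^3).
\]

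Now suppose \eqref{eq:forbidden-endpoint} held for some $A>0$. Subtracting the two expansions gives
\[
\om^{p/q-1}\,\frac{p(q-p)}{2}\,\varepsilon^2 I+O(\varepsilon^3)\le A\,\varepsilon^p[\varphi]_{s,p}^p .
\]
Divide by $\varepsilon^2$ and let $\varepsilon\to0$. The right-hand side tends to $0$ because $p>2$. The left-hand side tends to $\om^{p/q-1}\frac{p(q-p)}{2}I$, which is strictly positive because $q>p$ and $I>0$. This is a contradiction.

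The only delicate point is the uniformity of the Taylor remainders and of the composition with $x\mapsto x^{p/q}$ near $x=\om$. Both follow from the pointwise bound $1/2\le 1+\varepsilon\varphi\le 3/2$ together with boundedness of $\varphi$, so I expect no real obstacle beyond writing out the expansions carefully.
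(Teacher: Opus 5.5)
Your proposal is correct and matches the paper's proof: test on $1+\varepsilon\varphi$ with $\varphi$ smooth, non-constant and mean zero (the paper also suggests a first-order spherical harmonic), expand both Lebesgue terms to second order to get the defect $\om^{p/q-1}\frac{p(q-p)}{2}\varepsilon^2\int_{\Sp}\varphi^2\,\dd V_{\theta_0}>0$, and use that the seminorm term $|\varepsilon|^p[\varphi]_{s,p}^p$ is $o(\varepsilon^2)$ because $p>2$. Your explicit $O(\varepsilon^3)$ remainders, obtained from $1/2\le 1+\varepsilon\varphi\le 3/2$, are a slightly more careful version of the paper's $o(\varepsilon^2)$; write $|\varepsilon|^p$ rather than $\varepsilon^p$, or simply restrict to $\varepsilon>0$.
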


\begin{proof}
Assume by contradiction that \eqref{eq:forbidden-endpoint} holds for some $A>0$. Choose a non-constant function
\[
\varphi\in C^\infty(\Sp)
\]
such that
\[
\int_{\Sp}\varphi\,\dd V_{\theta_0}=0.
\]
For instance, one may take any nontrivial first-order spherical harmonic. Set
\[
m_2:=\int_{\Sp}\varphi^2\,\dd V_{\theta_0}>0.
\]
Choose $\varepsilon_0>0$ so small that
\[
1+\varepsilon\varphi(\xi)>0
\qquad\text{for all }\xi\in\Sp,\quad |\varepsilon|<\varepsilon_0.
\]
Then, for every $r>1$, Taylor's formula gives
\[
\int_{\Sp}(1+\varepsilon\varphi)^r\,\dd V_{\theta_0}
=
\om+\frac{r(r-1)}{2}m_2\varepsilon^2+o(\varepsilon^2)
\qquad (\varepsilon\to0),
\]
because the linear term vanishes by the zero-average condition on $\varphi$.

Let
\[
\alpha:=\frac pq\in(0,1).
\]
Applying the expansion of $x\mapsto x^\alpha$ around $x=\om$, we obtain
\begin{align}
\|1+\varepsilon\varphi\|_{L^q(\Sp)}^p
&=
\left(\int_{\Sp}(1+\varepsilon\varphi)^q\,\dd V_{\theta_0}\right)^{p/q}\notag\\
&=
\om^\alpha+\frac{p(q-1)}{2}\om^{\alpha-1}m_2\varepsilon^2+o(\varepsilon^2).
\label{eq:q-expansion-mean-zero}
\end{align}
Similarly,
\begin{equation}\label{eq:p-expansion-mean-zero}
\|1+\varepsilon\varphi\|_{L^p(\Sp)}^p
=
\om+\frac{p(p-1)}{2}m_2\varepsilon^2+o(\varepsilon^2).
\end{equation}
Moreover,
\[
[1+\varepsilon\varphi]_{s,p}^p
=
|\varepsilon|^p[\varphi]_{s,p}^p
=
o(\varepsilon^2)
\qquad (\varepsilon\to0),
\]
because $p>2$.

Substituting $u=1+\varepsilon\varphi$ into \eqref{eq:forbidden-endpoint}, and using
\[
\om^{-sp/Q}=\om^{p/q-1}=\om^{\alpha-1},
\]
we find from \eqref{eq:q-expansion-mean-zero} and \eqref{eq:p-expansion-mean-zero} that
\[
\om^\alpha+\frac{p(q-1)}{2}\om^{\alpha-1}m_2\varepsilon^2+o(\varepsilon^2)
\le
\om^\alpha+\frac{p(p-1)}{2}\om^{\alpha-1}m_2\varepsilon^2+o(\varepsilon^2).
\]
Dividing by $\varepsilon^2$ and letting $\varepsilon\to0$, we obtain
\[
q-1\le p-1,
\]
that is,
\[
q\le p.
\]
This is impossible because
\[
q=\frac{Qp}{Q-sp}>p.
\]
The contradiction proves the theorem.
\end{proof}

\begin{proof}[Proof of Theorem \ref{thm:main-Bhat-intro}, part \textup{(ii)}]
Proposition \ref{prop:any-B-larger} shows that every $B>\om^{-sp/Q}$ is admissible when $2<p<Q$, whereas Theorem \ref{thm:endpoint-fails-gt2} excludes the endpoint value itself. Therefore
\[
\Bhsph=(\om^{-sp/Q},\infty)
\qquad\text{for }2<p<Q.
\]
\end{proof}

\section{The exact weighted Heisenberg-group formulation}\label{sec:cayley-H}

We now transport the critical inequalities on the sphere to the Heisenberg group through the Cayley transform. The point of this section is not to recover the usual homogeneous Sobolev inequality on $\Hn$, but rather to identify the \emph{exact} noncompact formulation corresponding to the compact sphere problem. As will be seen, the transported inequality is necessarily weighted, both in the Lebesgue norms and in the fractional seminorm.

Recall that the Cayley transform
\[
\Psi_c:\Sp\setminus\{S\}\to\Hn
\]
is a CR diffeomorphism, with Jacobian
\[
J_c(z,t)=\frac{2^{2n+1}}{\bigl((1+|z|^2)^2+t^2\bigr)^{n+1}}.
\]
Since the south pole $S$ is a null set with respect to $\dd V_{\theta_0}$, one may freely identify functions on $\Sp$ with their restrictions to $\Sp\setminus\{S\}$. For $1\le r<\infty$ and a measurable function $U:\Hn\to\R$, we define the weighted norm
\[
\|U\|_{L^r(J_c)}
:=
\left(\int_{\Hn}|U(z,t)|^r J_c(z,t)\,\dd z\,\dd t\right)^{1/r}.
\]
Similarly, we define the transported kernel
\begin{equation}\label{eq:Kc-def}
K_c\bigl((z,t),(z',t')\bigr)
:=
\frac{J_c(z,t)\,J_c(z',t')}
{d\bigl(\Psi_c^{-1}(z,t),\Psi_c^{-1}(z',t')\bigr)^{Q+sp}},
\end{equation}
and the associated weighted seminorm
\[
[U]_{K_c,p}^p
:=
\int_{\Hn}\int_{\Hn}
|U(z,t)-U(z',t')|^p
K_c\bigl((z,t),(z',t')\bigr)\,
\dd z\,\dd t\,\dd z'\,\dd t'.
\]
This leads naturally to the transported fractional space
\[
HW^{s,p}_{K_c}(\Hn)
:=
\bigl\{U\in L^p(J_c):[U]_{K_c,p}<\infty\bigr\}.
\]

The next observation is immediate from the change of variables formula, but it is the basic reason why the admissible-set problem on $\Sp$ and the weighted problem on $\Hn$ are equivalent.

\begin{lemma}\label{lem:cayley-isometry}
Let $u:\Sp\to\R$ be measurable, and set
\[
U:=u\circ \Psi_c^{-1}\qquad\text{on }\Hn.
\]
Then, for every $1\le r<\infty$,
\begin{equation}\label{eq:exact-Lr-identity}
\|u\|_{L^r(\Sp)}=\|U\|_{L^r(J_c)},
\end{equation}
and
\begin{equation}\label{eq:exact-semi-identity}
[u]_{s,p}^p=[U]_{K_c,p}^p.
\end{equation}
In particular,
\[
u\in HW^{s,p}(\Sp)
\quad\Longleftrightarrow\quad
U\in HW^{s,p}_{K_c}(\Hn).
\]
\end{lemma}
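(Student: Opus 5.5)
The plan is to apply the change-of-variables formula \eqref{eq:change-var-background} twice: once on $\Sp$ for the Lebesgue norms, and once on the product $\Sp\times\Sp$ for the seminorm. Throughout we use that $S$ is a null set. Hence every integral over $\Sp$ equals the corresponding integral over $\Sp\setminus\{S\}$, and every integral over $\Sp\times\Sp$ equals the one over $(\Sp\setminus\{S\})\times(\Sp\setminus\{S\})$, since $\{S\}\times\Sp\cup\Sp\times\{S\}$ is null for the product measure.

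\emph{Step 1: the identity \eqref{eq:exact-Lr-identity}.} Apply \eqref{eq:change-var-background} to the nonnegative measurable function $f=|u|^r$. Since $f\circ\Psi_c^{-1}=|U|^r$, we get $\int_{\Sp}|u|^r\,\dd V_{\theta_0}=\int_{\Hn}|U|^rJ_c\,\dd z\,\dd t$. This holds in $[0,\infty]$, because the change-of-variables formula for a diffeomorphism extends from integrable to nonnegative measurable functions by monotone convergence. Taking $r$-th roots gives \eqref{eq:exact-Lr-identity}.

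\emph{Step 2: the identity \eqref{eq:exact-semi-identity}.} The map $\Psi_c\times\Psi_c:(\Sp\setminus\{S\})^2\to\Hn\times\Hn$ is a diffeomorphism. Its inverse pushes Lebesgue measure weighted by $J_c(z,t)J_c(z',t')$ forward to $\dd V_{\theta_0}\otimes\dd V_{\theta_0}$; this follows from \eqref{eq:change-var-background} applied in each variable separately, via Tonelli's theorem. Now apply this to the nonnegative integrand
\[
F(\xi,\eta)=|u(\xi)-u(\eta)|^p\,d(\xi,\eta)^{-(Q+sp)}.
\]
Substituting $\xi=\Psi_c^{-1}(z,t)$ and $\eta=\Psi_c^{-1}(z',t')$, we have $u(\xi)=U(z,t)$ and $u(\eta)=U(z',t')$, and the distance factor becomes exactly the denominator in \eqref{eq:Kc-def}. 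Multiplying by the product Jacobian yields the kernel $K_c$. The diagonal is null on both sides, so the value of the integrand where $d=0$ is irrelevant. This proves $[u]_{s,p}^p=[U]_{K_c,p}^p$ in $[0,\infty]$.

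\emph{Step 3: the equivalence of spaces.} Take $r=p$ in Step 1 to get $u\in L^p(\Sp)\iff U\in L^p(J_c)$. Step 2 gives $[u]_{s,p}<\infty\iff[U]_{K_c,p}<\infty$. Together these give $u\in HW^{s,p}(\Sp)\iff U\in HW^{s,p}_{K_c}(\Hn)$.

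There is no real obstacle. The only point requiring care is justifying the product change of variables for possibly infinite integrals, and the Tonelli argument handles this. It is also worth noting that measurability of $U$ is automatic, since $\Psi_c^{-1}$ is a diffeomorphism.
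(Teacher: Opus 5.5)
Your proposal is correct and follows the paper's proof. Both apply the change-of-variables formula \eqref{eq:change-var-background} to $|u|^r$ for the Lebesgue norms, and then in each variable of the double integral for the seminorm, reading off the kernel $K_c$. Your extra care (Tonelli and monotone convergence to allow infinite values, and the observation that $\{S\}$, the slices through $S$, and the diagonal are null sets) only makes explicit what the paper leaves implicit.
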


\begin{proof}
The identity \eqref{eq:exact-Lr-identity} follows directly from the change-of-variables formula \eqref{eq:change-var-background}:
\[
\int_{\Sp}|u|^r\,\dd V_{\theta_0}
=
\int_{\Hn}|u\circ\Psi_c^{-1}(z,t)|^r J_c(z,t)\,\dd z\,\dd t
=
\int_{\Hn}|U(z,t)|^r J_c(z,t)\,\dd z\,\dd t.
\]
Taking the $r$-th root yields \eqref{eq:exact-Lr-identity}.

For the seminorm, apply the same change of variables in each variable of the double integral \eqref{eq:def-seminorm}. Writing
\[
\xi=\Psi_c^{-1}(z,t),
\qquad
\eta=\Psi_c^{-1}(z',t'),
\]
we obtain
\begin{align*}
[u]_{s,p}^p
&=
\int_{\Sp}\int_{\Sp}
\frac{|u(\xi)-u(\eta)|^p}{d(\xi,\eta)^{Q+sp}}\,
\dd V_{\theta_0}(\xi)\,\dd V_{\theta_0}(\eta) \\
&=
\int_{\Hn}\int_{\Hn}
\frac{|U(z,t)-U(z',t')|^p\,J_c(z,t)\,J_c(z',t')}
{d\bigl(\Psi_c^{-1}(z,t),\Psi_c^{-1}(z',t')\bigr)^{Q+sp}}\,
\dd z\,\dd t\,\dd z'\,\dd t' \\
&=
[U]_{K_c,p}^p.
\end{align*}
This proves \eqref{eq:exact-semi-identity}, and the equivalence of the corresponding energy spaces follows immediately.
\end{proof}

We may therefore formulate the transported critical inequalities directly on $\Hn$. Define
\[
\mathbb B_p^{c}(\Hn)
:=
\left\{
\begin{aligned}
B\in\R:\ &\exists\,A\in\R\text{ such that }\|U\|_{L^{\qexp}(J_c)}
\le A[U]_{K_c,p}+B\|U\|_{L^p(J_c)},\\
&\text{for all }U\in HW^{s,p}_{K_c}(\Hn)
\end{aligned}
\right\},
\]
and
\[
\widehat{\mathbb B}_p^{c}(\Hn)
:=
\left\{
\begin{aligned}
B\in\R:\ &\exists\,A\in\R\text{ such that }\|U\|_{L^{\qexp}(J_c)}^p\\
&\le A[U]_{K_c,p}^p+B\|U\|_{L^p(J_c)}^p,\\
&\text{for all }U\in HW^{s,p}_{K_c}(\Hn)
\end{aligned}
\right\}.
\]
These are the exact Heisenberg analogues of $\Bsph$ and $\Bhsph$.

\begin{theorem}\label{thm:cayley-equivalence}
With the notation above,
\[
\mathbb B_p^c(\Hn)=\Bsph,
\qquad
\widehat{\mathbb B}_p^c(\Hn)=\Bhsph.
\]
Consequently,
\[
\mathbb B_p^c(\Hn)=[\om^{-s/Q},\infty),
\]
and
\[
\widehat{\mathbb B}_p^c(\Hn)=
\begin{cases}
[\om^{-sp/Q},\infty), & 1<p\le2,\\[4pt]
(\om^{-sp/Q},\infty), & 2<p<Q.
\end{cases}
\]
\end{theorem}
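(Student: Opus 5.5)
The plan is to show that the composition map $T:u\mapsto U:=u\circ\Psi_c^{-1}$ is a bijection from $HW^{s,p}(\Sp)$ onto $HW^{s,p}_{K_c}(\Hn)$. By Lemma \ref{lem:cayley-isometry}, $T$ preserves every quantity that enters the two inequalities. The admissible sets are defined purely in terms of these quantities, so they must coincide. The explicit descriptions then follow by substituting Theorems \ref{thm:main-B-intro} and \ref{thm:main-Bhat-intro}.

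The first step is to check that $T$ is bijective on the level of measurable functions modulo null sets. Since $\Psi_c$ is a diffeomorphism from $\Sp\setminus\{S\}$ onto $\Hn$ and $\{S\}$ is $\dd V_{\theta_0}$-null, every measurable $U$ on $\Hn$ arises as $u\circ\Psi_c^{-1}$. Here $u:=U\circ\Psi_c$ on $\Sp\setminus\{S\}$, with an arbitrary value at $S$, and $u$ is unique up to null sets. Null sets correspond under $\Psi_c$, because $J_c$ is smooth and strictly positive on $\Hn$. Combining this with the equivalence $u\in HW^{s,p}(\Sp)\iff U\in HW^{s,p}_{K_c}(\Hn)$ from Lemma \ref{lem:cayley-isometry}, we conclude that $T$ is a bijection between the two energy spaces.

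The second step is the set equality itself. Fix $B\in\Bsph$ with its constant $A$. Given $U\in HW^{s,p}_{K_c}(\Hn)$, take $u=T^{-1}U$. Applying \eqref{eq:exact-Lr-identity} with $r=\qexp$ and $r=p$, together with \eqref{eq:exact-semi-identity}, the sphere inequality \eqref{eq:AB1-intro} for $u$ becomes, term by term, the weighted inequality for $U$ with the same $A$ and $B$. Hence $B\in\mathbb B_p^c(\Hn)$. The converse inclusion follows by the symmetric argument, applying $T$ instead of $T^{-1}$. The power form \eqref{eq:AB2-intro} is handled identically, since raising both sides of each identity to the $p$-th power preserves equality; this gives $\widehat{\mathbb B}_p^c(\Hn)=\Bhsph$.

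Finally, the explicit formulas follow from Theorem \ref{thm:main-B-intro} for $\mathbb B_p^c(\Hn)$, and from Theorem \ref{thm:main-Bhat-intro}(i)--(ii) for $\widehat{\mathbb B}_p^c(\Hn)$, split according to $p\le2$ or $p>2$. The only point needing any care is the surjectivity of $T$, i.e.\ the treatment of the south pole and of null sets. This reduces to the positivity of $J_c$ and the fact that $\{S\}$ is a single point, so I expect no genuine obstacle.
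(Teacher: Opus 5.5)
Your proposal is correct and is essentially the paper's own proof. Both arguments send $U\in HW^{s,p}_{K_c}(\Hn)$ to $u=U\circ\Psi_c$ (extended arbitrarily at $S$) and $u$ to $U=u\circ\Psi_c^{-1}$, use the identities of Lemma \ref{lem:cayley-isometry} to carry each inequality across with the same $A$ and $B$, and then read off the explicit sets from Theorems \ref{thm:main-B-intro} and \ref{thm:main-Bhat-intro}; your remark that null sets correspond because $J_c>0$ only slightly sharpens the paper's handling of the south pole.
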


\begin{proof}
Suppose first that $B\in \Bsph$. Then there exists $A\in\R$ such that
\[
\|u\|_{L^{\qexp}(\Sp)}
\le A[u]_{s,p}+B\|u\|_{L^p(\Sp)}
\qquad\text{for all }u\in HW^{s,p}(\Sp).
\]
Let $U\in HW^{s,p}_{K_c}(\Hn)$, and define
\[
u:=U\circ\Psi_c
\qquad\text{on }\Sp\setminus\{S\}.
\]
Extending $u$ arbitrarily at the single point $S$, we have $u\in HW^{s,p}(\Sp)$ by Lemma \ref{lem:cayley-isometry}. Applying the sphere inequality to $u$ and then using \eqref{eq:exact-Lr-identity} and \eqref{eq:exact-semi-identity}, we find
\[
\|U\|_{L^{\qexp}(J_c)}
=
\|u\|_{L^{\qexp}(\Sp)}
\le
A[u]_{s,p}+B\|u\|_{L^p(\Sp)}
=
A[U]_{K_c,p}+B\|U\|_{L^p(J_c)}.
\]
Thus $B\in \mathbb B_p^c(\Hn)$, and therefore
\[
\Bsph\subset \mathbb B_p^c(\Hn).
\]

The reverse inclusion is identical. If $B\in \mathbb B_p^c(\Hn)$, apply the weighted inequality on $\Hn$ to
\[
U:=u\circ\Psi_c^{-1},
\qquad u\in HW^{s,p}(\Sp),
\]
and use Lemma \ref{lem:cayley-isometry} again. This yields $B\in \Bsph$, hence
\[
\mathbb B_p^c(\Hn)=\Bsph.
\]

The same argument, with the obvious modifications, applies to the power inequality and gives
\[
\widehat{\mathbb B}_p^c(\Hn)=\Bhsph.
\]
The explicit descriptions of the two admissible sets then follow from Theorems \ref{thm:main-B-intro} and \ref{thm:main-Bhat-intro}.
\end{proof}

\begin{remark}\label{rem:weighted-not-unweighted}
The formulation obtained above is the exact image of the compact sphere problem under the Cayley transform. In particular, it should not be confused with the standard unweighted homogeneous inequality on $\Hn$ for compactly supported functions. The weight $J_c$ and the transported kernel $K_c$ encode precisely the compactification of $\Hn$ by one boundary point and are therefore intrinsic to the present problem.
\end{remark}

\section{Constrained inequalities and coercive extensions}\label{sec:further-constraints}

Having determined the admissible lower-order coefficients on the full space $HW^{s,p}(\Sp)$, we next examine how these coefficients are affected by natural constraint classes. For a subclass $\mathcal X\subset HW^{s,p}(\Sp)$, we write $\mathbb B_p(\mathcal X)$ and $\widehat{\mathbb B}_p(\mathcal X)$ for the admissible $B$-sets in \eqref{eq:AB1-intro} and \eqref{eq:AB2-intro}, respectively, when the inequalities are required to hold only for functions in $\mathcal X$. The behavior turns out to be sharply different according to whether the constraint still permits constants or excludes them completely.

We begin with the nonlinear first-moment class associated with the first-order spherical harmonics. Let $\xi_1,\dots,\xi_{2n+2}$ denote the standard real coordinate functions on $\Sp\subset\R^{2n+2}$ and set
\[
\mathcal M
:=
\left\{
u\in HW^{s,p}(\Sp):
\int_{\Sp}\xi_i|u|^{\qexp}\,\dd V_{\theta_0}=0
\ \text{for }i=1,\dots,2n+2
\right\}.
\]
Since the functions $\xi_i$ are odd with respect to the antipodal symmetry of the sphere, their integrals vanish. In particular, the above constraint still contains every constant function. As a consequence, it does not alter the optimal lower-order coefficient.

\begin{proposition}\label{prop:first-moment-no-improve}
Every constant function belongs to $\mathcal M$. Consequently,
\[
\mathbb B_p(\mathcal M)=[\om^{-s/Q},\infty).
\]
Moreover,
\[
\widehat{\mathbb B}_p(\mathcal M)=[\om^{-sp/Q},\infty)
\qquad\text{if }1<p\le2,
\]
and
\[
(\om^{-sp/Q},\infty)\subseteq \widehat{\mathbb B}_p(\mathcal M)\subseteq [\om^{-sp/Q},\infty)
\qquad\text{if }2<p<Q.
\]
In particular, the first-moment condition by itself does not improve the optimal lower-order coefficient. In the range $2<p<Q$, the admissibility of the endpoint on $\mathcal M$ remains open.
\end{proposition}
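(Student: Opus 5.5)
The argument is a sandwich between the unrestricted admissible sets and the lower bounds produced by constant test functions. The first step is to check that every constant lies in $\mathcal M$. If $u\equiv c$, then $|u|^{\qexp}=|c|^{\qexp}$, so
\[
\int_{\Sp}\xi_i|u|^{\qexp}\,\dd V_{\theta_0}=|c|^{\qexp}\int_{\Sp}\xi_i\,\dd V_{\theta_0}.
\]
The right-hand side vanishes. The antipodal map $\xi\mapsto-\xi$ preserves $\Sp$ and $\theta_0$: indeed $\theta_0=\frac{i}{2}\sum(z^j\dd\overline z^j-\overline z^j\dd z^j)$ is invariant under $z\mapsto -z$. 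Hence it preserves $\dd V_{\theta_0}$, while each $\xi_i$ changes sign. Since constants clearly have finite seminorm, they belong to $HW^{s,p}(\Sp)$ and hence to $\mathcal M$.

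Next come the upper inclusions, which follow from monotonicity in the class. Since $\mathcal M\subset HW^{s,p}(\Sp)$, any pair $(A,B)$ for which \eqref{eq:AB1-intro} or \eqref{eq:AB2-intro} holds on all of $HW^{s,p}(\Sp)$ also works on $\mathcal M$. This gives $\Bsph\subseteq\mathbb B_p(\mathcal M)$ and $\Bhsph\subseteq\widehat{\mathbb B}_p(\mathcal M)$. By Theorem \ref{thm:main-B-intro} and Theorem \ref{thm:main-Bhat-intro}, we obtain $[\om^{-s/Q},\infty)\subseteq\mathbb B_p(\mathcal M)$. We also obtain $[\om^{-sp/Q},\infty)\subseteq\widehat{\mathbb B}_p(\mathcal M)$ for $1<p\le2$, and $(\om^{-sp/Q},\infty)\subseteq\widehat{\mathbb B}_p(\mathcal M)$ for $2<p<Q$.

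For the reverse inclusions, I will observe that the proofs of Lemma \ref{lem:lower-bound-B} and Lemma \ref{lem:lower-bound-Bhat} use only the test function $u\equiv c\neq0$. This function lies in $\mathcal M$ by the first step. Testing with it gives $B\ge\om^{-s/Q}$ and $B\ge\om^{-sp/Q}$ respectively, for any admissible $B$ on $\mathcal M$. Hence $\mathbb B_p(\mathcal M)\subseteq[\om^{-s/Q},\infty)$ and $\widehat{\mathbb B}_p(\mathcal M)\subseteq[\om^{-sp/Q},\infty)$. Combining these with the previous step yields the equalities for $\mathbb B_p(\mathcal M)$ and for $\widehat{\mathbb B}_p(\mathcal M)$ when $p\le2$, and the sandwich when $2<p<Q$.

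There is no genuine obstacle here. The only step that is not purely formal is explaining why the endpoint question stays open for $p>2$. The counterexample in Theorem \ref{thm:endpoint-fails-gt2} uses $u=1+\varepsilon\varphi$ with $\varphi$ a first-order harmonic, and such a $u$ need not lie in $\mathcal M$: the moment $\int\xi_i(1+\varepsilon\varphi)^{q}$ has nonzero linear term $q\varepsilon\int\xi_i\varphi$. One could try to replace $\varphi$ by a mean-zero function orthogonal to all the $\xi_i$, for instance a harmonic in $\mathcal H_{j,k}$ with $j+k\ge2$. The constraint then holds to first order, but exact membership requires a correction. I will not pursue this and will simply state the endpoint as open, as the proposition does.
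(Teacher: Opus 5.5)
Your argument is correct and coincides with the paper's proof: constants lie in $\mathcal M$ by antipodal invariance of $\dd V_{\theta_0}$, the unrestricted results (Theorems \ref{thm:B-endpoint} and \ref{thm:endpoint-le2}, Proposition \ref{prop:any-B-larger}) restrict to $\mathcal M$, and testing on constants gives the reverse inclusions. Your closing aside is wrong to say a correction is needed: take $\varphi$ mean-zero and antipodally even, e.g.\ $\varphi=|z_1|^2-|z_2|^2$; then $\xi_i(1+\varepsilon\varphi)^{\qexp}$ is odd, so $1+\varepsilon\varphi\in\mathcal M$ exactly, and the proof of Theorem \ref{thm:endpoint-fails-gt2} applies verbatim, showing that the endpoint $\om^{-sp/Q}$ is in fact not admissible on $\mathcal M$ for $2<p<Q$.
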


\begin{proof}
By symmetry,
\[
\int_{\Sp}\xi_i\,\dd V_{\theta_0}=0
\qquad\text{for }i=1,\dots,2n+2.
\]
Therefore, if $u\equiv c$ is constant, then
\[
\int_{\Sp}\xi_i|u|^{\qexp}\,\dd V_{\theta_0}
=
|c|^{\qexp}\int_{\Sp}\xi_i\,\dd V_{\theta_0}
=
0,
\]
so $u\in\mathcal M$.

For the linear form, Theorem \ref{thm:B-endpoint} restricted to $\mathcal M$ shows that $\om^{-s/Q}\in \mathbb B_p(\mathcal M)$, and monotonicity gives all larger coefficients. On the other hand, testing \eqref{eq:AB1-intro} on constants shows that no smaller value can be admissible. Hence
\[
\mathbb B_p(\mathcal M)=[\om^{-s/Q},\infty).
\]

If $1<p\le2$, Theorem \ref{thm:endpoint-le2} restricted to $\mathcal M$ gives the endpoint coefficient $\om^{-sp/Q}$, while constants again force the lower bound $B\ge \om^{-sp/Q}$. This yields
\[
\widehat{\mathbb B}_p(\mathcal M)=[\om^{-sp/Q},\infty).
\]

If $2<p<Q$, Proposition \ref{prop:any-B-larger} restricted to $\mathcal M$ shows that every $B>\om^{-sp/Q}$ is admissible, whereas constants still imply the lower bound $B\ge \om^{-sp/Q}$. Thus
\[
(\om^{-sp/Q},\infty)\subseteq \widehat{\mathbb B}_p(\mathcal M)\subseteq [\om^{-sp/Q},\infty).
\]
The proof is complete.
\end{proof}

A qualitatively different phenomenon occurs for linear constraint classes that exclude non-zero constants. In that setting one recovers a genuine coercive estimate, and the lower-order term becomes unnecessary. Since Theorem \ref{thm:main-constraints-intro} is stated for arbitrary finite-codimensional closed linear subspaces, we formulate and prove the coercive estimate directly in that generality.

\begin{theorem}\label{thm:general-constraint}
Let $\mathcal X\subset HW^{s,p}(\Sp)$ be a finite-codimensional closed linear subspace such that $\mathcal X$ contains no non-zero constant function. Then there exists a constant $C_{\mathcal X}>0$ such that
\begin{equation}\label{eq:constraint-Lp}
\|u\|_{L^p(\Sp)}\le C_{\mathcal X}[u]_{s,p}
\qquad\text{for all }u\in\mathcal X.
\end{equation}
Consequently, there exists $C_{\mathcal X}'>0$ such that
\begin{equation}\label{eq:constraint-Lq}
\|u\|_{L^{\qexp}(\Sp)}\le C_{\mathcal X}'[u]_{s,p}
\qquad\text{and}\qquad
\|u\|_{L^{\qexp}(\Sp)}^p\le (C_{\mathcal X}')^p [u]_{s,p}^p
\qquad\text{for all }u\in\mathcal X.
\end{equation}
In particular, $B=0$ is admissible on $\mathcal X$ for both \eqref{eq:AB1-intro} and \eqref{eq:AB2-intro}.
\end{theorem}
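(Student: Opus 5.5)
The plan is to prove \eqref{eq:constraint-Lp} by the same compactness-and-contradiction argument used for Proposition \ref{prop:poincare}, and then to deduce \eqref{eq:constraint-Lq} from Proposition \ref{prop:cont-embed}. Notice that finite codimension is not really needed. What matters is that $\mathcal X$ is closed in $HW^{s,p}(\Sp)$ and meets the constants only in $0$.

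Suppose \eqref{eq:constraint-Lp} fails. Then there are $u_k\in\mathcal X$ with
\[
\|u_k\|_{L^p(\Sp)}=1,
\qquad
[u_k]_{s,p}\le \frac1k .
\]
This sequence is bounded in $HW^{s,p}(\Sp)$. By Proposition \ref{prop:compact-embed} with $r=p<\qexp$, a subsequence satisfies $u_k\to u$ in $L^p(\Sp)$ and almost everywhere. Fatou's lemma, applied to the double integral, gives lower semicontinuity of the seminorm:
\[
[u]_{s,p}\le\liminf_{k\to\infty}[u_k]_{s,p}=0 .
\]
Lemma \ref{lem:seminorm-zero} then shows that $u\equiv c$ for some constant $c$, and $\|u\|_{L^p}=1$ forces $c\neq0$.

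The main obstacle is to show that this limit actually lies in $\mathcal X$, since the convergence so far is only in $L^p$. The seminorms rescue us here: because $[u_k-c]_{s,p}=[u_k]_{s,p}\to0$ and $\|u_k-c\|_{L^p}\to0$, we have $u_k\to c$ in the norm of $HW^{s,p}(\Sp)$. Since $\mathcal X$ is closed in that norm, $c\in\mathcal X$. This contradicts $\mathcal X\cap\{\text{constants}\}=\{0\}$, and \eqref{eq:constraint-Lp} follows.

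For \eqref{eq:constraint-Lq}, apply Proposition \ref{prop:cont-embed} to $u\in\mathcal X$ and combine it with \eqref{eq:constraint-Lp}:
\[
\|u\|_{L^{\qexp}(\Sp)}
\le C_S\bigl([u]_{s,p}+\|u\|_{L^p(\Sp)}\bigr)
\le C_S(1+C_{\mathcal X})[u]_{s,p}.
\]
So we may take $C_{\mathcal X}'=C_S(1+C_{\mathcal X})$, and the power form follows by raising both sides to the power $p$. Finally, for $B=0$, these inequalities are exactly \eqref{eq:AB1-intro} and \eqref{eq:AB2-intro} restricted to $\mathcal X$, with $A=C_{\mathcal X}'$ and $A=(C_{\mathcal X}')^p$ respectively.
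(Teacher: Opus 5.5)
Your proof is correct and follows essentially the same route as the paper. Both arguments normalize a violating sequence and extract an $L^p$-convergent subsequence via Proposition~\ref{prop:compact-embed}. They identify the limit as a constant by lower semicontinuity and Lemma~\ref{lem:seminorm-zero}, upgrade to convergence in $HW^{s,p}(\Sp)$ because the seminorm vanishes on constants, use closedness of $\mathcal X$ to reach a contradiction, and then conclude with Proposition~\ref{prop:cont-embed}. Your remark that finite codimension is never used also holds for the paper's argument.
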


\begin{proof}
Suppose that \eqref{eq:constraint-Lp} fails. Then there exists a sequence $(u_k)\subset \mathcal X$ such that
\[
\|u_k\|_{L^p(\Sp)}=1,
\qquad
[u_k]_{s,p}\le \frac1k.
\]
In particular, $(u_k)$ is bounded in $HW^{s,p}(\Sp)$. By Proposition \ref{prop:compact-embed}, after passing to a subsequence we may assume that
\[
u_k\to u \quad\text{strongly in }L^p(\Sp)
\]
and almost everywhere on $\Sp$. By lower semicontinuity,
\[
[u]_{s,p}\le \liminf_{k\to\infty}[u_k]_{s,p}=0,
\]
so $u$ is constant almost everywhere by Lemma \ref{lem:seminorm-zero}. Since $[u]_{s,p}=0$, the seminorm triangle inequality gives
\[
[u_k-u]_{s,p}\le [u_k]_{s,p}+[u]_{s,p}=[u_k]_{s,p}\to0.
\]
Together with the strong $L^p$ convergence, this implies that $u_k\to u$ strongly in $HW^{s,p}(\Sp)$. Because $\mathcal X$ is closed and each $u_k$ belongs to $\mathcal X$, we obtain $u\in\mathcal X$. By assumption, $\mathcal X$ contains no non-zero constant, hence $u\equiv0$. This contradicts
\[
\|u\|_{L^p(\Sp)}=\lim_{k\to\infty}\|u_k\|_{L^p(\Sp)}=1.
\]
The contradiction proves \eqref{eq:constraint-Lp}.

Applying Proposition \ref{prop:cont-embed} and then \eqref{eq:constraint-Lp}, we find
\[
\|u\|_{L^{\qexp}(\Sp)}
\le
C_S\bigl([u]_{s,p}+\|u\|_{L^p(\Sp)}\bigr)
\le
C_S(1+C_{\mathcal X})[u]_{s,p}.
\]
This is the first estimate in \eqref{eq:constraint-Lq}; the second follows by raising both sides to the power $p$.
\end{proof}

The preceding theorem immediately yields the full admissible sets on any coercive finite-codimensional class.

\begin{corollary}\label{cor:constraint-all-real}
Let $\mathcal X$ be as in Theorem \ref{thm:general-constraint}. Then
\[
\mathbb B_p(\mathcal X)=\mathbb R,
\qquad
\widehat{\mathbb B}_p(\mathcal X)=\mathbb R.
\]
\end{corollary}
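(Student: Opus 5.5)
The plan is to read the corollary off Theorem \ref{thm:general-constraint}. The only extra ingredient is the elementary observation that negative coefficients are harmless, because $\mathcal X$ contains no nonzero constants. Fix an arbitrary $B\in\R$. Take $C_{\mathcal X}$ and $C_{\mathcal X}'$ as in \eqref{eq:constraint-Lp} and \eqref{eq:constraint-Lq}. The task is to produce, for this $B$, a constant $A$ such that \eqref{eq:AB1-intro} and \eqref{eq:AB2-intro} hold for every $u\in\mathcal X$.

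\textbf{The case $B\ge0$.} This is immediate. The term $B\|u\|_{L^p(\Sp)}$ (or $B\|u\|_{L^p(\Sp)}^p$) is nonnegative, so \eqref{eq:constraint-Lq} already gives \eqref{eq:AB1-intro} with $A=C_{\mathcal X}'$. It likewise gives \eqref{eq:AB2-intro} with $A=(C_{\mathcal X}')^p$.

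\textbf{The case $B<0$.} Here we must absorb the negative lower-order term into the seminorm. For the linear form, \eqref{eq:constraint-Lp} gives
\[
-B\|u\|_{L^p(\Sp)}\le |B|\,C_{\mathcal X}[u]_{s,p}.
\]
Hence
\[
\|u\|_{L^{\qexp}(\Sp)}\le C_{\mathcal X}'[u]_{s,p}
=\bigl(C_{\mathcal X}'+|B|C_{\mathcal X}\bigr)[u]_{s,p}-|B|C_{\mathcal X}[u]_{s,p}
\le \bigl(C_{\mathcal X}'+|B|C_{\mathcal X}\bigr)[u]_{s,p}+B\|u\|_{L^p(\Sp)}.
\]
So we may take $A=C_{\mathcal X}'+|B|C_{\mathcal X}$. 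For the power form, raise \eqref{eq:constraint-Lp} to the $p$-th power to get $\|u\|_{L^p(\Sp)}^p\le C_{\mathcal X}^p[u]_{s,p}^p$. Then the same computation gives \eqref{eq:AB2-intro} with $A=(C_{\mathcal X}')^p+|B|C_{\mathcal X}^p$.

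Combining the two cases, every real $B$ is admissible. This yields $\mathbb B_p(\mathcal X)=\widehat{\mathbb B}_p(\mathcal X)=\R$. There is no real obstacle here. The one point needing care is the absorption step for $B<0$: it is exactly where the coercive estimate \eqref{eq:constraint-Lp} is used, not merely \eqref{eq:constraint-Lq}. Without the exclusion of constants this step would fail, since a constant function gives $[u]_{s,p}=0$ and the inequality cannot hold when $B$ is below the threshold.
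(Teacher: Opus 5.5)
Your proof is correct and is essentially the paper's argument. The paper takes $A=C_{\mathcal X}'+B_-C_{\mathcal X}$ for the linear form and $A=(C_{\mathcal X}')^p+B_-C_{\mathcal X}^p$ for the power form, with $B_-=\max\{-B,0\}$, which merges your two cases into one absorption step using the coercive $L^p$ estimate.
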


\begin{proof}
Fix $B\in\mathbb R$ and write $B_-:=\max\{-B,0\}$. By Theorem \ref{thm:general-constraint},
\[
\|u\|_{L^{\qexp}(\Sp)}\le C_{\mathcal X}'[u]_{s,p}
\qquad\text{and}\qquad
\|u\|_{L^p(\Sp)}\le C_{\mathcal X}[u]_{s,p}
\qquad\text{for all }u\in\mathcal X.
\]
Hence
\[
\bigl(C_{\mathcal X}'+B_-C_{\mathcal X}\bigr)[u]_{s,p}+B\|u\|_{L^p(\Sp)}
\ge
C_{\mathcal X}'[u]_{s,p}
\ge
\|u\|_{L^{\qexp}(\Sp)},
\]
so $B\in \mathbb B_p(\mathcal X)$. The same argument with $p$-th powers yields
\[
\bigl((C_{\mathcal X}')^p+B_-C_{\mathcal X}^p\bigr)[u]_{s,p}^p+B\|u\|_{L^p(\Sp)}^p
\ge
(C_{\mathcal X}')^p [u]_{s,p}^p
\ge
\|u\|_{L^{\qexp}(\Sp)}^p,
\]
and therefore $B\in \widehat{\mathbb B}_p(\mathcal X)$.
\end{proof}

Two important special cases are worth recording explicitly.

\begin{corollary}\label{cor:zero-average}
Let
\[
\mathcal Z:=\left\{u\in HW^{s,p}(\Sp):\int_{\Sp}u\,\dd V_{\theta_0}=0\right\}.
\]
Then
\[
\mathbb B_p(\mathcal Z)=\mathbb R,
\qquad
\widehat{\mathbb B}_p(\mathcal Z)=\mathbb R.
\]
In particular, there exists $C_0>0$ such that
\[
\|u\|_{L^{\qexp}(\Sp)}\le C_0 [u]_{s,p},
\qquad
\|u\|_{L^{\qexp}(\Sp)}^p\le C_0^p [u]_{s,p}^p
\qquad\text{for all }u\in\mathcal Z.
\]
\end{corollary}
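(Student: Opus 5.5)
The plan is to realize $\mathcal Z$ as a special case of Theorem \ref{thm:general-constraint} and then read off the admissible sets from Corollary \ref{cor:constraint-all-real}.

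\textbf{$\mathcal Z$ is a linear subspace.} This is clear, since $\mathcal Z$ is the kernel of the linear functional
\[
\ell(u):=\int_{\Sp}u\,\dd V_{\theta_0}.
\]

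\textbf{$\ell$ is bounded, so $\mathcal Z$ is closed.} By H\"older's inequality,
\[
|\ell(u)|\le \om^{1-1/p}\|u\|_{L^p(\Sp)}\le \om^{1-1/p}\|u\|_{HW^{s,p}(\Sp)}.
\]
Hence $\ell$ is continuous on $HW^{s,p}(\Sp)$ and its kernel $\mathcal Z$ is closed.

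\textbf{$\mathcal Z$ has finite codimension.} The functional $\ell$ is nonzero, because $\ell(1)=\om$. Its kernel therefore has codimension exactly one.

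\textbf{$\mathcal Z$ contains no nonzero constant.} If $u\equiv c$ lies in $\mathcal Z$, then $c\,\om=0$, so $c=0$.

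With these four facts, Theorem \ref{thm:general-constraint} applies to $\mathcal X=\mathcal Z$ and gives the two coercive estimates with $C_0:=C_{\mathcal Z}'$. Corollary \ref{cor:constraint-all-real} then yields $\mathbb B_p(\mathcal Z)=\widehat{\mathbb B}_p(\mathcal Z)=\mathbb R$.

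A more direct alternative is also available. For $u\in\mathcal Z$ we have $\bar u=0$, so Proposition \ref{prop:mean-zero-sobolev} gives the estimates immediately with $C_0=C_M$. The statement about all real $B$ then follows by the same absorption argument as in Corollary \ref{cor:constraint-all-real}, which uses the Poincar\'e bound $\|u\|_{L^p(\Sp)}\le C_P[u]_{s,p}$ from Proposition \ref{prop:poincare} to handle negative $B$. I would mention both routes.

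There is no real obstacle. The only point needing care is the closedness of $\mathcal Z$, which rests on the continuity of $\ell$ with respect to the $HW^{s,p}$ norm, and the H\"older bound above settles it.
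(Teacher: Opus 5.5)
Your proposal is correct and matches the paper's proof: $\mathcal Z$ is the kernel of the continuous linear functional $u\mapsto\int_{\Sp}u\,\dd V_{\theta_0}$, hence a closed subspace of codimension one that contains no nonzero constant, so Theorem \ref{thm:general-constraint} and Corollary \ref{cor:constraint-all-real} apply directly. Your alternative route is also valid and avoids the compactness argument in Theorem \ref{thm:general-constraint}: it uses Proposition \ref{prop:mean-zero-sobolev} with $\bar u=0$, and Proposition \ref{prop:poincare} to absorb negative $B$.
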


\begin{proof}
Consider the continuous linear functional
\[
L(u)=\int_{\Sp}u\,\dd V_{\theta_0}.
\]
Then $\mathcal Z=\ker L$ is a closed hyperplane of $HW^{s,p}(\Sp)$ and therefore has codimension one. Since $\mathcal Z$ contains no non-zero constant, Theorem \ref{thm:general-constraint} applies. Corollary \ref{cor:constraint-all-real} then yields the admissible-set statement.
\end{proof}

\begin{corollary}\label{cor:finite-orthogonality}
Let $\mathcal Y\subset C^\infty(\Sp)$ be a finite-dimensional subspace containing the constant function $1$, and define
\[
\mathcal Z_{\mathcal Y}
:=
\left\{
u\in HW^{s,p}(\Sp):
\int_{\Sp}u\psi\,\dd V_{\theta_0}=0
\ \text{for all }\psi\in\mathcal Y
\right\}.
\]
Then
\[
\mathbb B_p(\mathcal Z_{\mathcal Y})=\mathbb R,
\qquad
\widehat{\mathbb B}_p(\mathcal Z_{\mathcal Y})=\mathbb R.
\]
In particular, if
\[
\mathcal Y=\mathrm{span}\{1,\xi_1,\dots,\xi_{2n+2}\},
\]
then every function orthogonal to the constants and to the first-order spherical harmonics satisfies a pure seminorm inequality.
\end{corollary}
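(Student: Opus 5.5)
The plan is to realize $\mathcal Z_{\mathcal Y}$ as an instance of Theorem \ref{thm:general-constraint} and then invoke Corollary \ref{cor:constraint-all-real}. Two facts need to be established: $\mathcal Z_{\mathcal Y}$ is a closed subspace of finite codimension in $HW^{s,p}(\Sp)$, and $\mathcal Z_{\mathcal Y}$ contains no non-zero constant.

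For the first point, fix a basis $\psi_1,\dots,\psi_m$ of $\mathcal Y$ and set
\[
L_j(u):=\int_{\Sp}u\psi_j\,\dd V_{\theta_0},\qquad j=1,\dots,m.
\]
Each $\psi_j$ is smooth, hence bounded, on the compact sphere. By H\"older's inequality,
\[
|L_j(u)|\le\|\psi_j\|_{L^{p'}(\Sp)}\|u\|_{L^p(\Sp)}\le\|\psi_j\|_{L^{p'}(\Sp)}\|u\|_{HW^{s,p}(\Sp)},
\]
so each $L_j$ is a continuous linear functional on $HW^{s,p}(\Sp)$. Then
\[
\mathcal Z_{\mathcal Y}=\bigcap_{j=1}^m\ker L_j
\]
is closed. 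It has codimension at most $m$, because the map $u\mapsto(L_1(u),\dots,L_m(u))\in\R^m$ has kernel $\mathcal Z_{\mathcal Y}$ and therefore induces an injection of the quotient $HW^{s,p}(\Sp)/\mathcal Z_{\mathcal Y}$ into $\R^m$.

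For the second point, suppose $u\equiv c$ lies in $\mathcal Z_{\mathcal Y}$. Since $1\in\mathcal Y$, testing with $\psi=1$ gives $c\,\om=0$, so $c=0$.

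Theorem \ref{thm:general-constraint} then gives the pure seminorm inequalities on $\mathcal Z_{\mathcal Y}$, and Corollary \ref{cor:constraint-all-real} gives $\mathbb B_p(\mathcal Z_{\mathcal Y})=\widehat{\mathbb B}_p(\mathcal Z_{\mathcal Y})=\R$. For the special case, observe that $\mathrm{span}\{1,\xi_1,\dots,\xi_{2n+2}\}$ is a finite-dimensional space of smooth functions containing $1$, so it satisfies the hypotheses on $\mathcal Y$. Hence every $u$ orthogonal to the constants and to the first-order spherical harmonics satisfies $\|u\|_{L^{\qexp}(\Sp)}\le C[u]_{s,p}$.

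No step here is a genuine obstacle. The only point needing care is that "finite codimension" is understood in the algebraic sense used in Theorem \ref{thm:general-constraint}; that theorem's proof actually uses only closedness together with the exclusion of non-zero constants, so the argument goes through in either reading.
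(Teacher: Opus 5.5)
Your proposal is correct and follows essentially the same route as the paper. You write $\mathcal Z_{\mathcal Y}=\bigcap_j\ker L_j$ with the continuous functionals $L_j(u)=\int_{\Sp}u\psi_j\,\dd V_{\theta_0}$, use $1\in\mathcal Y$ to exclude non-zero constants, and conclude via Theorem \ref{thm:general-constraint} and Corollary \ref{cor:constraint-all-real}; your explicit H\"older bound, the quotient-injection count of the codimension, and the observation that the proof of Theorem \ref{thm:general-constraint} really only needs closedness are helpful added detail rather than a different argument.
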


\begin{proof}
Choose a basis $\psi_1,\dots,\psi_m$ of $\mathcal Y$. For each $j$, the map
\[
L_j(u):=\int_{\Sp}u\psi_j\,\dd V_{\theta_0}
\]
is a continuous linear functional on $HW^{s,p}(\Sp)$, because $\psi_j$ is smooth and $HW^{s,p}(\Sp)\hookrightarrow L^p(\Sp)$. Hence
\[
\mathcal Z_{\mathcal Y}=\bigcap_{j=1}^m \ker L_j
\]
is a closed linear subspace of finite codimension. Since $1\in \mathcal Y$, the class $\mathcal Z_{\mathcal Y}$ contains no non-zero constant. Theorem \ref{thm:general-constraint} and Corollary \ref{cor:constraint-all-real} therefore give the conclusion.
\end{proof}

The preceding discussion may be summarized by a simple structural dichotomy.

\begin{theorem}\label{thm:linear-dichotomy}
Let $\mathcal X\subset HW^{s,p}(\Sp)$ be a finite-codimensional closed linear subspace. Then the following assertions are equivalent:
\begin{enumerate}[label=\textup{(\roman*)}]
\item $\mathcal X$ contains no non-zero constant function.
\item There exists $C>0$ such that
\[
\|u\|_{L^p(\Sp)}\le C [u]_{s,p}
\qquad\text{for all }u\in\mathcal X.
\]
\item There exists $C'>0$ such that
\[
\|u\|_{L^{\qexp}(\Sp)}\le C' [u]_{s,p}
\qquad\text{for all }u\in\mathcal X.
\]
\item $0\in \mathbb B_p(\mathcal X)\cap \widehat{\mathbb B}_p(\mathcal X)$.
\item $\mathbb B_p(\mathcal X)=\widehat{\mathbb B}_p(\mathcal X)=\mathbb R$.
\end{enumerate}
\end{theorem}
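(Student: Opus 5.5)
The plan is to prove the cycle of implications (i)$\Rightarrow$(ii)$\Rightarrow$(iii)$\Rightarrow$(iv)$\Rightarrow$(v)$\Rightarrow$(i). Most arrows are already available, so the real work is collecting earlier results and checking one converse direction.

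\emph{(i)$\Rightarrow$(ii) and (ii)$\Rightarrow$(iii).} The first is exactly \eqref{eq:constraint-Lp} in Theorem~\ref{thm:general-constraint}. For the second, insert (ii) into Proposition~\ref{prop:cont-embed}:
\[
\|u\|_{L^{\qexp}}\le C_S\bigl([u]_{s,p}+\|u\|_{L^p}\bigr)\le C_S(1+C)[u]_{s,p}.
\]
This is the same computation as in the proof of Theorem~\ref{thm:general-constraint}, and it does not use (i) directly.

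\emph{(iii)$\Rightarrow$(iv).} Take $A=C'$ in \eqref{eq:AB1-intro} and $A=(C')^p$ in \eqref{eq:AB2-intro}, both with $B=0$. This gives $0$ in both admissible sets.

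\emph{(iv)$\Rightarrow$(v).} Suppose
\[
\|u\|_{L^{\qexp}}\le A[u]_{s,p}\quad\text{and}\quad \|u\|_{L^{\qexp}}^p\le \widehat A[u]_{s,p}^p
\]
on $\mathcal X$. By H\"older, $\|u\|_{L^p}\le\om^{s/Q}\|u\|_{L^{\qexp}}\le\om^{s/Q}A[u]_{s,p}$. Hence (ii) holds, and the absorption argument of Corollary~\ref{cor:constraint-all-real} applies verbatim to any $B\in\R$. For negative $B$, we add $B_-\om^{s/Q}A$ to the seminorm coefficient, and the analogous $p$-th power constant for $\widehat{\mathbb B}_p$. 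This gives both admissible sets equal to $\R$. Here I should not simply cite Corollary~\ref{cor:constraint-all-real}, since its hypothesis is (i), which is what the cycle is meant to conclude. Instead I will redo its two-line argument using only the Poincar\'e-type bound derived from (iv).

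\emph{(v)$\Rightarrow$(i).} This is the only step with genuine content, though it is short. Suppose $\mathcal X$ contains a constant $c\neq0$. Pick any $B<\om^{-s/Q}$, for example $B=0$, which lies in $\mathbb B_p(\mathcal X)$ by (v). Testing \eqref{eq:AB1-intro} on $u\equiv c$ gives $[u]_{s,p}=0$, so
\[
|c|\,\om^{1/\qexp}\le B|c|\,\om^{1/p},
\]
i.e.\ $B\ge\om^{-s/Q}$, exactly as in Lemma~\ref{lem:lower-bound-B}. This is a contradiction. Since $\mathcal X$ is a linear subspace, any constant in it is a multiple of $1$, so it suffices to exclude $1\in\mathcal X$, which this test does.

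I expect no serious obstacle. The one point to watch is the logical bookkeeping just mentioned: avoid circular use of corollaries whose hypothesis is (i), and note that closedness and finite codimension of $\mathcal X$ enter only through Theorem~\ref{thm:general-constraint} in the step (i)$\Rightarrow$(ii).
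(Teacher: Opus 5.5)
Your proposal is correct and uses the same ingredients as the paper: Theorem \ref{thm:general-constraint} for the compactness step, H\"older's inequality with $\om^{1/p-1/\qexp}=\om^{s/Q}$, the absorption trick of Corollary \ref{cor:constraint-all-real}, and testing on constants. The only difference is that the paper runs the cycle in the opposite direction, (i)$\Rightarrow$(v)$\Rightarrow$(iv)$\Rightarrow$(iii)$\Rightarrow$(ii)$\Rightarrow$(i), so it can cite Corollary \ref{cor:constraint-all-real} directly for the first arrow; your ordering instead requires redoing the absorption in (iv)$\Rightarrow$(v) from the H\"older-derived bound, which you correctly do to avoid circularity.
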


\begin{proof}
The implication \textup{(i)}$\Rightarrow$\textup{(v)} is exactly Corollary \ref{cor:constraint-all-real}, and \textup{(v)}$\Rightarrow$\textup{(iv)} is immediate. If \textup{(iv)} holds, then in particular there exists $C'>0$ such that
\[
\|u\|_{L^{\qexp}(\Sp)}\le C' [u]_{s,p}
\qquad\text{for all }u\in\mathcal X,
\]
which is \textup{(iii)}. Since $\Sp$ has finite measure, \textup{(iii)} implies \textup{(ii)} by H\"older's inequality:
\[
\|u\|_{L^p(\Sp)}
\le
\om^{1/p-1/\qexp}\|u\|_{L^{\qexp}(\Sp)}
\le
\om^{1/p-1/\qexp} C' [u]_{s,p}.
\]
Finally, if \textup{(ii)} held while $\mathcal X$ contained a non-zero constant $c$, then
\[
0<\|c\|_{L^p(\Sp)}\le C[c]_{s,p}=0,
\]
which is impossible. Thus \textup{(ii)} implies \textup{(i)}.
\end{proof}

\begin{remark}
The nonlinear first-moment class and the linear classes considered above illustrate the basic mechanism behind the lower-order term. Constraints that preserve constants leave the sharp lower-order coefficient unchanged; by contrast, finite-codimensional linear constraints that remove constants yield a coercive inequality and hence eliminate the need for any lower-order term.
\end{remark}

\section{Subcritical consequences and concluding remarks}\label{sec:consequences}

The critical inequalities obtained above admit a standard subcritical interpolation family. Although these estimates are not the main focus of the paper, they provide a useful bridge between the critical endpoint and the compact subcritical embeddings, and they also transfer exactly to the weighted Heisenberg formulation.

\begin{corollary}\label{cor:subcritical}
Let $r\in[p,\qexp)$. Then the following statements hold.
\begin{enumerate}[label=\textup{(\roman*)}]
\item For every $\varepsilon>0$ there exists $C_{\varepsilon,r}>0$ such that
\begin{equation}\label{eq:subcritical-1}
\|u\|_{L^r(\Sp)}
\le \varepsilon [u]_{s,p}+C_{\varepsilon,r}\|u\|_{L^p(\Sp)}
\qquad\text{for all }u\in HW^{s,p}(\Sp).
\end{equation}
\item For every $\varepsilon>0$ there exists $\widetilde C_{\varepsilon,r}>0$ such that
\begin{equation}\label{eq:subcritical-2}
\|u\|_{L^r(\Sp)}^p
\le \varepsilon [u]_{s,p}^p+\widetilde C_{\varepsilon,r}\|u\|_{L^p(\Sp)}^p
\qquad\text{for all }u\in HW^{s,p}(\Sp).
\end{equation}
\end{enumerate}
\end{corollary}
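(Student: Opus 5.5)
The plan is to interpolate between $L^p$ and $L^{\qexp}$ and then split with Young's inequality. Fix $r\in[p,\qexp)$; the case $r=p$ is trivial (take $C_{\varepsilon,r}=1$, resp.\ $\widetilde C_{\varepsilon,r}=1$). For $p<r<\qexp$, choose $\lambda\in(0,1)$ by
\[
\frac1r=\frac{\lambda}{p}+\frac{1-\lambda}{\qexp}.
\]
The classical log-convexity (H\"older) interpolation inequality gives
\[
\|u\|_{L^r(\Sp)}\le \|u\|_{L^p(\Sp)}^{\lambda}\|u\|_{L^{\qexp}(\Sp)}^{1-\lambda}.
\]
We then bound the critical norm by Proposition \ref{prop:cont-embed} (or Theorem \ref{thm:B-endpoint}): $\|u\|_{L^{\qexp}}\le C_S([u]_{s,p}+\|u\|_{L^p})$.

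For \eqref{eq:subcritical-1}, this yields
\[
\|u\|_{L^r}\le C_S^{1-\lambda}\,\|u\|_{L^p}^{\lambda}\bigl([u]_{s,p}+\|u\|_{L^p}\bigr)^{1-\lambda}.
\]
Apply the weighted Young inequality $a^\lambda b^{1-\lambda}\le \delta b+C(\delta,\lambda)a$, which holds with $C(\delta,\lambda)=\lambda\bigl((1-\lambda)/\delta\bigr)^{(1-\lambda)/\lambda}$. Take $a=\|u\|_{L^p}$, $b=C_S([u]_{s,p}+\|u\|_{L^p})$ and $\delta=\varepsilon/C_S$. This gives
\[
\|u\|_{L^r}\le \varepsilon[u]_{s,p}+\bigl(\varepsilon+C(\varepsilon/C_S,\lambda)\bigr)\|u\|_{L^p},
\]
which is \eqref{eq:subcritical-1}.

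For \eqref{eq:subcritical-2}, apply \eqref{eq:subcritical-1} with a smaller parameter $\eta$ and raise to the power $p$, using $(x+y)^p\le 2^{p-1}(x^p+y^p)$:
\[
\|u\|_{L^r}^p\le 2^{p-1}\eta^p[u]_{s,p}^p+2^{p-1}C_{\eta,r}^p\|u\|_{L^p}^p.
\]
Choosing $\eta$ with $2^{p-1}\eta^p=\varepsilon$ gives the claim with $\widetilde C_{\varepsilon,r}=2^{p-1}C_{\eta,r}^p$.

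No step is a serious obstacle. The only points needing care are the bookkeeping of the Young constants and the fact that $\lambda>0$ requires $r<\qexp$; at the endpoint $r=\qexp$ the argument correctly breaks down, consistent with Theorem \ref{thm:main-B-intro}. Alternatively, a compactness contradiction argument based on Proposition \ref{prop:compact-embed} would also work, but the interpolation route is constructive and gives explicit constants.
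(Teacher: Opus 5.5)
Your proof is correct and follows the paper's route: H\"older interpolation between $L^p$ and $L^{\qexp}$, a bound on the critical norm, Young's inequality, and then $(a+b)^p\le 2^{p-1}(a^p+b^p)$ for part (ii). The differences are cosmetic. The paper bounds the critical norm by the sharp endpoint inequality of Theorem~\ref{thm:B-endpoint} and uses subadditivity of $t\mapsto t^\theta$ before applying Young, whereas you apply Young directly to the whole bracket coming from Proposition~\ref{prop:cont-embed}. You also handle $r=p$ separately, a case the paper's choice $\theta\in(0,1)$ silently omits.
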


\begin{proof}
Fix $r\in[p,\qexp)$ and choose $\theta\in(0,1)$ such that
\[
\frac1r=\frac{\theta}{\qexp}+\frac{1-\theta}{p}.
\]
By interpolation,
\[
\|u\|_{L^r(\Sp)}
\le
\|u\|_{L^{\qexp}(\Sp)}^\theta \|u\|_{L^p(\Sp)}^{1-\theta}.
\]
Using Theorem \ref{thm:B-endpoint}, we obtain
\[
\|u\|_{L^{\qexp}(\Sp)}
\le
A_0 [u]_{s,p}+\om^{-s/Q}\|u\|_{L^p(\Sp)}.
\]
Since $0<\theta<1$, one has $(a+b)^\theta\le a^\theta+b^\theta$ for all $a,b\ge0$, and therefore
\[
\|u\|_{L^r(\Sp)}
\le
A_0^\theta [u]_{s,p}^\theta \|u\|_{L^p(\Sp)}^{1-\theta}
+
\om^{-\theta s/Q}\|u\|_{L^p(\Sp)}.
\]
Applying Young's inequality in the form
\[
x^\theta y^{1-\theta}
\le
\theta \delta x+(1-\theta)\delta^{-\theta/(1-\theta)}y,
\qquad x,y,\delta>0,
\]
we deduce that for every $\varepsilon>0$,
\[
A_0^\theta [u]_{s,p}^\theta \|u\|_{L^p(\Sp)}^{1-\theta}
\le
\varepsilon [u]_{s,p}+C_{\varepsilon,r}^{(1)}\|u\|_{L^p(\Sp)}.
\]
This proves \eqref{eq:subcritical-1}. To derive \eqref{eq:subcritical-2}, apply \eqref{eq:subcritical-1} with
\[
\varepsilon_1=\left(\frac{\varepsilon}{2^{p-1}}\right)^{1/p}
\]
and then use
\[
(a+b)^p\le 2^{p-1}(a^p+b^p).
\]
After renaming the constant, we obtain \eqref{eq:subcritical-2}.
\end{proof}

\begin{corollary}\label{cor:subcritical-Hn}
Let $r\in[p,\qexp)$. For every $\varepsilon>0$ there exist constants $C_{\varepsilon,r},\widetilde C_{\varepsilon,r}>0$ such that
\[
\|U\|_{L^r(J_c)}
\le
\varepsilon [U]_{K_c,p}+C_{\varepsilon,r}\|U\|_{L^p(J_c)},
\]
and
\[
\|U\|_{L^r(J_c)}^p
\le
\varepsilon [U]_{K_c,p}^p+\widetilde C_{\varepsilon,r}\|U\|_{L^p(J_c)}^p
\]
for all $U\in HW^{s,p}_{K_c}(\Hn)$.
\end{corollary}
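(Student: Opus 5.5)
The plan is to transport Corollary \ref{cor:subcritical} to $\Hn$ through the Cayley transform, exactly as Theorem \ref{thm:cayley-equivalence} was obtained from the sphere results. No new analysis on $\Hn$ is needed; the whole content lies in the exact isometry identities of Lemma \ref{lem:cayley-isometry}.

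First, fix $r\in[p,\qexp)$ and $\varepsilon>0$, and take the constants $C_{\varepsilon,r}$ and $\widetilde C_{\varepsilon,r}$ given by Corollary \ref{cor:subcritical}. Let $U\in HW^{s,p}_{K_c}(\Hn)$ be arbitrary. Define $u:=U\circ\Psi_c$ on $\Sp\setminus\{S\}$ and extend it arbitrarily at the south pole $S$. Since $\{S\}$ is a $\dd V_{\theta_0}$-null set, neither the norms nor the seminorm of $u$ depend on this extension. Lemma \ref{lem:cayley-isometry} then gives $u\in HW^{s,p}(\Sp)$ together with
\[
\|u\|_{L^r(\Sp)}=\|U\|_{L^r(J_c)},\qquad
\|u\|_{L^p(\Sp)}=\|U\|_{L^p(J_c)},\qquad
[u]_{s,p}=[U]_{K_c,p}.
\]
Here I use that $u\circ\Psi_c^{-1}=U$, so the lemma applies with this $U$.

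Second, apply \eqref{eq:subcritical-1} and \eqref{eq:subcritical-2} to $u$. Substituting the three identities above into each side turns them verbatim into the two claimed weighted inequalities, with the same constants. This also shows that the constants depend only on $n,p,s,r,\varepsilon$, exactly as on the sphere.

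I do not expect any real obstacle. The only point needing care is measure-theoretic bookkeeping: the identity $\|u\|_{L^r(\Sp)}=\|U\|_{L^r(J_c)}$ must hold for the exponent $r$ in question and not only for $p$ and $\qexp$, and Lemma \ref{lem:cayley-isometry} covers every $1\le r<\infty$. One should also note that the change of variables \eqref{eq:change-var-background} applies to the nonnegative measurable integrands $|u|^r$ and $|u(\xi)-u(\eta)|^p d(\xi,\eta)^{-Q-sp}$ by monotone convergence. Hence it holds even before finiteness is known, so no a priori integrability of $U$ in $L^r(J_c)$ needs to be assumed.
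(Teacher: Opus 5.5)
Your proposal is correct and follows the same route as the paper: you set $u=U\circ\Psi_c$, use Lemma \ref{lem:cayley-isometry} to identify the $L^r$, $L^p$ and seminorm quantities, and read off both weighted inequalities from Corollary \ref{cor:subcritical} with the same constants. Your extra remarks, that $S$ is a null set and that the change of variables applies to nonnegative integrands before finiteness is known, are harmless refinements that the paper leaves implicit.
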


\begin{proof}
Let
\[
u:=U\circ \Psi_c
\]
on $\Sp\setminus\{S\}$. By Lemma \ref{lem:cayley-isometry}, the identities
\[
\|u\|_{L^r(\Sp)}=\|U\|_{L^r(J_c)},
\qquad
\|u\|_{L^p(\Sp)}=\|U\|_{L^p(J_c)},
\qquad
[u]_{s,p}=[U]_{K_c,p}
\]
hold. Thus Corollary \ref{cor:subcritical} applied to $u$ is exactly equivalent to the stated inequalities for $U$.
\end{proof}

\begin{remark}
The analysis in this paper leaves several natural directions for further investigation. The first is the endpoint problem for nonlinear constrained classes that still contain constants, in particular the class $\mathcal M$ when $2<p<Q$. The second is the determination of the optimal leading coefficient $A$ in the critical inequalities, which would complement the lower-order classification obtained here. A third direction is to extend the compact argument to more general compact strictly pseudoconvex pseudohermitian manifolds and to related fractional operators defined spectrally from the sub-Laplacian.
\end{remark}

\section*{Acknowledgments}

%The authors would like to thank the anonymous referee for a careful reading of the manuscript and for helpful comments and suggestions.

\textbf{Funding.}
This work is supported by National Natural Science Foundation of China (12301145, 12261107, 12561020) and Yunnan Fundamental Research Projects (202301AU070144, 202401AU070123).

\textbf{Author contributions.}
The authors jointly developed the results and wrote the manuscript. All authors contributed equally to this work.

\textbf{Data availability.}
Data sharing is not applicable to this article, since no new data were created or analyzed in this study.

{\bf Conflict of Interest:} The authors declare that they have no conflict of interest.
\bibliographystyle{plain}
\bibliography{ref}

\end{document}